\title{A Hierarchy of Edge-Weight Symmetries in Perfect Matchings}

\documentclass{article}
\usepackage{fullpage}

\usepackage[utf8]{inputenc}
\usepackage{amsmath}
\usepackage{amsthm,amssymb}
\usepackage{nicefrac,amsfonts}
\usepackage{thmtools,mathtools,leftindex,leftidx}
\usepackage{caption,subcaption}
\usepackage{thm-restate}
\usepackage{epsfig,graphicx,graphics,color,tikz,tikz-cd,tcolorbox}
\usepackage[table]{xcolor}
\usepackage{tcolorbox}
\tcbset{
  colback=white,
  colframe=black,
  boxrule=0.5pt,
  left=1mm,
  right=1mm,
  top=1mm,
  bottom=1mm
}
\usepackage{enumerate,enumitem}
\usepackage[sort,nocompress]{cite}
\usepackage{xspace}
\usepackage{bbm}
\usepackage{hyperref}
\usepackage[capitalise]{cleveref}
\usepackage{float}
\hypersetup{
    colorlinks=true,       % false: boxed links; true: colored links
    linkcolor=blue,        % color of internal links (change box color with linkbordercolor)
    citecolor=magenta,         % color of links to bibliography
    filecolor=magenta,     % color of file links
    urlcolor=cyan,         % color of external links
    linktoc=all
}
\usepackage{titling}

\usepackage{enumitem}
\newenvironment{symenum}
 {\enumerate[label=\normalfont{(\noexpand\thisenumsymbol)},align=parleft,labelindent=0pt,itemindent=0pt,labelsep=35pt,leftmargin=*]}
 {\endenumerate}
\newcommand\thisenumsymbol{}
\newcommand\itemsymbol[1]{%
  \renewcommand{\thisenumsymbol}{#1}%
  \item
}

\newlength{\bibitemsep}
\newlength{\bibparskip}
\let\oldthebibliography\thebibliography
\renewcommand\thebibliography[1]{%
  \oldthebibliography{#1}%
  \setlength{\parskip}{\bibitemsep}%
  \setlength{\itemsep}{\bibparskip}%
}

\makeatletter
\renewcommand{\paragraph}{%
  \@startsection{paragraph}{4}%
  {\z@}{1.6ex \@plus 1ex \@minus .2ex}{-0.5em}%
  {\normalfont\normalsize\bfseries}%
}
\makeatother

\theoremstyle{plain}
\newtheorem{thm}{Theorem}[section]
\newtheorem{lem}[thm]{Lemma}
\newtheorem{cor}[thm]{Corollary}

\newtheorem{qu}[thm]{Question}

\theoremstyle{definition}

\newtheorem{rem}[thm]{Remark}

\newcommand*{\claimproofname}{Proof of claim.}

\makeatletter
\newcommand{\leqnomode}{\tagsleft@true}
\newcommand{\reqnomode}{\tagsleft@false}
\makeatother

\makeatletter
 \newcommand{\linkdest}[1]{\Hy@raisedlink{\hypertarget{#1}{}}}
\makeatother

\def\final{0}  % set this to 1 to get a comment-free version
\def\iflong{\iffalse}
\ifnum\final=0  %namely if we allow comments in the output
\newcommand{\kristof}[1]{{\color{red}[{\tiny \textbf{Kristóf:}  #1}]\marginpar{\color{red}*}}}
\newcommand{\yutaro}[1]{{\color{blue}[{\tiny \textbf{Yutaro:}  #1}]\marginpar{\color{blue}*}}}
\newcommand{\viktor}[1]{{\color{magenta}[{\tiny \textbf{Viktor:}  #1}]\marginpar{\color{magenta}*}}}
\else % in this case [final=1] we don't want any comments to show
\newcommand{\kristof}[1]{}
\newcommand{\yutaro}[1]{}
\newcommand{\viktor}[1]{}
\fi

\DeclareMathOperator\im{im}

\DeclareMathOperator\spa{span}

\newcommand{\bR}{\mathbb{R}}

\newcommand{\bZ}{\mathbb{Z}}

\newcommand{\cC}{\mathcal{C}}

\newcommand{\cF}{\mathcal{F}}

\newcommand{\cM}{\mathcal{M}}
\newcommand{\cN}{\mathcal{N}}

\newcommand{\cP}{\mathcal{P}}

\let\Right\bigr
\let\Left\bigl
\makeatletter
\def\bigr#1{\Right#1\@ifnextchar){\!\bigr}{}}
\def\bigl#1{\Left#1\@ifnextchar({\!\bigl}{}}
\makeatother

\thanksmarkseries{alph}
\author{
Kristóf Bérczi\thanks{MTA-ELTE Matroid Optimization Research Group and HUN-REN–ELTE Egerváry Research Group, Department of Operations Research, ELTE Eötvös Loránd University, and HUN-REN Alfréd Rényi Institute of Mathematics, Budapest, Hungary. Email: \texttt{kristof.berczi@ttk.elte.hu}.}
\and
Viktor Csaplár\thanks{Department of Operations Research, ELTE Eötvös Loránd University, Budapest, Hungary. Email: \texttt{viktorcsaplar@gmail.com}.}
\and
Yutaro Yamaguchi\thanks{Department of Information and Physical Sciences, Graduate School of Information Science and Technology, Osaka University, Osaka, Japan. Email: \texttt{yutaro.yamaguchi@ist.osaka-u.ac.jp}.}
}
\date{}

\begin{document}
\maketitle

\thispagestyle{empty}
%%%%%%%%%%%%%%%%%%%%%%%%%%%%%%%%
\begin{abstract} 
Motivated by the exact weight perfect matching problem and recent parameterized algorithms for finding an $\ell$-th smallest perfect matching, we study structural properties of edge-weight symmetries in graphs. Recent work by El Maalouly et al.\ (ESA 2025) showed that excluding all perfect matchings whose weight is at most the $(\ell - 1)$-th smallest possible value in the graph requires fixing at most $2(\ell-1)$ edges in non-bipartite graphs and at most $\ell-1$ edges in bipartite graphs. A natural open question is whether fixing a single edge is always sufficient to shift the extreme (minimum or maximum) weight of a perfect matching when the global minimum and maximum weights differ.

To address this, we define and analyze a hierarchy of progressively weaker edge-weight properties: \textit{node-induced weights}, \textit{even walk} and \textit{cycle symmetries}, \textit{perfect matching equality}, and the \textit{edge min-max property}. We derive a basic hierarchy among these conditions and show that they become equivalent in bipartite graphs. For general graphs, we provide tight structural characterizations, based on block and tight cut decompositions, under which even cycle symmetry and perfect matching equality force node-induced weights.

Finally, we resolve the motivating open question in the negative by constructing a matching-covered non-bipartite graph that satisfies the edge min-max property (every edge is contained in a minimum-weight perfect matching and a maximum-weight one) but violates perfect matching equality (all perfect matchings have the same weight). This counterexample shows that a single edge is not always sufficient to eliminate all minimum-weight or maximum-weight perfect matchings, thereby proving the tightness of the $2(\ell-1)$ bound for $\ell=2$. We also discuss extensions of this framework to $b$-factors and arborescences.

\medskip

\noindent \textbf{Keywords:} Arborescences, $b$-factors, Exact matching, Matching-covered graphs, Node-induced weights, Perfect matchings

\end{abstract}
%%%%%%%%%%%%%%%%%%%%%%%%%%%%%%%%
\newpage
\pagenumbering{roman}
\tableofcontents
\newpage
\pagenumbering{arabic}
\setcounter{page}{1}

%%%%%%%%%%%%%%%%%%%%%%%%%%%%%%%%
\section{Introduction}
\label{sec:intro}
%%%%%%%%%%%%%%%%%%%%%%%%%%%%%%%%

The \emph{exact weight perfect matching problem} asks, given an edge-weighted graph and a target value $y$, whether there exists a perfect matching whose weight is exactly $y$. Papadimitriou and Yannakakis~\cite{papadimitriou1982complexity} introduced this problem for graphs in which each edge has weight $0$ or $1$, calling it the \emph{exact matching} problem, and Mulmuley, Vazirani, and Vazirani~\cite{mulmuley1987matching} proposed a randomized polynomial-time algorithm for it. Since then, it has been a long-standing open problem whether a deterministic polynomial-time algorithm exists, although a very recent work by Du~\cite{du2026bipartite} claims a positive resolution in the bipartite case.

In contrast, minimum- and maximum-weight perfect matchings can be computed deterministically in polynomial time~\cite{edmonds1965maximum}. Between this fact and the situation of the exact weight perfect matching problem, a natural question arises as follows: what about a perfect matching of the second smallest or second largest weight? Specifically, let $x_1 < x_2 < \dots < x_q$ denote the distinct weights of perfect matchings, and we say that a perfect matching is \emph{$\ell$-th smallest} if its weight is $x_\ell$. Our objective is to compute an $\ell$-th smallest perfect matching for a given $\ell$. Note that this problem is different from the classical \emph{$K$-best enumeration} problem~\cite{lawler1972procedure}, which asks for $K$ distinct feasible solutions whose objective values are the $K$ smallest or largest, typically allowing ties to be broken in an arbitrary but consistent way. The following theorem of El Maalouly, Haslebacher, Taubner, and Wulf~\cite{maalouly2025finding} provides a structural characterization that leads to a simple XP algorithm for finding an $\ell$-th smallest perfect matching.

\begin{thm}[El Maalouly, Haslebacher, Taubner, Wulf]\label{thm:ESA2025}
    Let $G$ be an edge-weighted graph for which the possible weights of perfect matchings are $x_1 < x_2 < \dots < x_q$, and let $\ell\in[q]$.
    \begin{enumerate}[label=(\roman*)]\itemsep0em
        \item If $G$ is bipartite, there exists a subset of edges $F$ of size at most $\ell - 1$ such  that the minimum weight of a perfect matching containing $F$ is $x_\ell$.
        \item If $G$ is non-bipartite, there exists a subset of edges $F$ of size at most $2(\ell - 1)$ such that the minimum weight of a perfect matching containing $F$ is $x_\ell$.
    \end{enumerate}
\end{thm}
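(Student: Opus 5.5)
We argue by induction on $\ell$, using symmetric differences between perfect matchings. For $\ell=1$ take $F=\emptyset$. For the step, assume $F_{\ell-1}$ has been found, with $|F_{\ell-1}|\le \ell-2$ (bipartite) or $\le 2(\ell-2)$ (general), such that the minimum weight of a perfect matching containing $F_{\ell-1}$ is $x_{\ell-1}$. We want to add at most one edge (bipartite) or two edges (general) so that every perfect matching of weight $\le x_{\ell-1}$ is excluded, while some perfect matching of weight exactly $x_\ell$ survives. A direct induction of this kind is awkward, because perfect matchings of weight $x_{\ell-1}$ not containing $F_{\ell-1}$ are not excluded by the set. We therefore aim for a stronger, non-inductive statement.

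\emph{Claim.} Let $M$ be any perfect matching of weight $x_\ell$, chosen so that $|M\triangle M_1|$ is minimal among such matchings, where $M_1$ ranges over the minimum-weight perfect matchings. Then $M$ can be pinned down by at most $\ell-1$ (respectively $2(\ell-1)$) of its edges.

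To prove the claim, we decompose $M\triangle M_1$ into alternating cycles $C_1,\dots,C_k$. Each cycle has nonnegative gain $w(C_i\cap M)-w(C_i\cap M_1)$. Flipping any subfamily of cycles gives a perfect matching, and its weight is $x_1$ plus the sum of the chosen gains.

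\begin{itemize}
\item If some cycle had gain $0$, removing it would give a closer matching of the same weight. So by minimality all gains are positive.
\item The $2^k$ subset sums are perfect matching weights lying in $[x_1,x_\ell]$. A chain of partial sums $0<g_1<g_1+g_2<\dots$ produces $k+1$ distinct values in this range, so $k\le \ell-1$.
\end{itemize}

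Now choose $F$ to contain one $M$-edge from each cycle (bipartite case). Any perfect matching $N\supseteq F$ that is cheaper than $M$ must be handled by an exchange argument. We compare $N$ with $M$ via $N\triangle M$ and use optimality of $M_1$ together with minimality of $M$, concluding that $w(N)\ge x_\ell$ whenever $N$ uses one prescribed edge in each cycle.

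In the bipartite case, the key is that the union $M_1\cup N\cup M$ decomposes nicely: every alternating structure is a union of even cycles, and an $N$-cycle meeting $C_i$ at the fixed edge can be recombined with $C_i$ to yield matchings contradicting the choices. This is where one edge per cycle suffices.

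In the non-bipartite case, this recombination can fail because odd cycles appear in $M_1\cup N$, i.e.\ blossoms. We compensate by fixing two edges per cycle: two $M$-edges of $C_i$ splitting it into two alternating paths. These force any competing $N$ to respect the parity structure along $C_i$, which gives $|F|\le 2(\ell-1)$.

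The main obstacle is the exchange argument showing that no $N\supseteq F$ has weight $<x_\ell$. This is delicate especially in the general case, where $N\triangle M_1$ may route through several $C_i$. The first approach to try is an uncrossing argument: take $N$ minimizing $|N\triangle M|$, and show that some alternating cycle of $N\triangle M$ avoiding $F$ can be flipped or swapped into $M_1$. That produces a matching violating either the minimality of $x_1$ or the choice of $M$.
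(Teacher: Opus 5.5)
\textbf{There is a genuine gap, and it sits exactly at what you call the ``main obstacle''.} Your key assertion is that any perfect matching $N$ containing one prescribed $M$-edge from each cycle of $M\triangle M_1$ satisfies $w(N)\ge x_\ell$. This is false for every choice of prescribed edges, so no exchange argument can establish it, and the bound $|F|\le k\le \ell-1$ cannot be obtained this way.

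Here is a counterexample.
\begin{itemize}
\item Take the bipartite graph formed by the $8$-cycle $u_1v_1u_2v_2u_3v_3u_4v_4u_1$ together with the chords $u_1v_2$ and $u_3v_4$.
\item Set $w(u_iv_i)=1$ for $i\in[4]$, $w(u_1v_4)=-1$, and weight $0$ on every other edge.
\item The perfect matchings are $M=\{u_1v_1,u_2v_2,u_3v_3,u_4v_4\}$ of weight $4$, $M_1=\{u_2v_1,u_3v_2,u_4v_3,u_1v_4\}$ of weight $-1$, $\{u_1v_2,u_2v_1,u_3v_4,u_4v_3\}$ of weight $0$, and $N_a=\{u_1v_2,u_2v_1,u_3v_3,u_4v_4\}$ and $N_b=\{u_1v_1,u_2v_2,u_3v_4,u_4v_3\}$, both of weight $2$.
\end{itemize}
So $x_1=-1<x_2=0<x_3=2<x_4=4$. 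Both $M$ and $M_1$ are unique, so no tie-breaking in your choice of the pair can help. Moreover, $M\triangle M_1$ is a single cycle, so $k=1$. Yet every edge of $M$ lies in $N_a$ or $N_b$, which have weight $x_3<x_4$.

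The matchings that must be excluded are not obtained by flipping cycles of $M\triangle M_1$. Your subset-sum argument (which is itself correct) therefore bounds the wrong quantity. The non-bipartite rule of ``two $M$-edges splitting $C_i$'' is likewise unsupported.

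\textbf{The missing ingredient} is a certificate, coming from an optimal dual solution of the perfect matching LP, that identifies edges of $M$ lying in no minimum-weight perfect matching. This is the route of the cited proof: it is sketched at the beginning of Section~\ref{sec:bfacandarbs} and mirrored in the proof of Theorem~\ref{thm:arbs}. By complementary slackness, a non-minimum $M$ falls into one of two cases.
\begin{itemize}
\item $M$ contains an edge of positive reduced cost. Fixing that single edge excludes all minimum-weight matchings. In bipartite graphs there are no odd-set constraints, so this is the only case.
\item $M$ has at least two (hence at least three) edges in $\delta(S)$ for some odd set $S$ with positive dual value. Fixing two of these edges does the job.
\end{itemize}
With this $F_0\subseteq M$ fixed, every perfect matching containing $F_0$ has weight at least $x_2$. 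Hence, in $G-V(F_0)$, the matching $M\setminus F_0$ has rank at most $\ell-1$, and induction gives the bounds $\ell-1$ and $2(\ell-1)$.

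This induction runs in the opposite direction from the one you discarded: it removes the bottom weight class first and then recurses in the restricted instance, rather than extending a set built for $x_{\ell-1}$. Some global structure of this kind is unavoidable. Already for $\ell=2$, Theorem~\ref{thm:main4} exhibits a non-minimum perfect matching every single edge of which lies in a minimum-weight one.
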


By this result, in order to compute an $\ell$-th smallest perfect matching, it suffices to compute a minimum-weight perfect matching in all the instances in which we specify an edge subset $F$ of size at most $\ell - 1$ in the bipartite case and at most $2(\ell - 1)$ in the non-bipartite case that must be included in the solution. 

Theorem~\ref{thm:ESA2025} provides a natural approach for the exact weight perfect matching problem. By symmetry, the result holds for $\ell$-th largest perfect matchings, allowing the interval $[x_\ell, x_{q-\ell+1}]$ to be computed via the same method. In particular, given a target value $y$, if a perfect matching of weight $y$ exists, the process eventually identifies an index $\ell$ such that $x_\ell = y$ or $x_{q-\ell+1} = y$. Otherwise, the procedure provides a certificate of non-existence by yielding an index $\ell$ such that either $x_\ell < y < x_{\ell+1}$ or $x_{q-\ell} < y < x_{q-\ell+1}$.

The bounds in Theorem~\ref{thm:ESA2025} are tight in the following sense: \cite{maalouly2025finding} also provided edge-weighted graphs for which one cannot guarantee a smaller forcing set than $\ell-1$ in the bipartite case and $2(\ell-1)$ in the non-bipartite case. In particular, even for $\ell=2$, there are non-bipartite graphs where any edge set excluding all minimum-weight perfect matchings must contain at least two edges. An analogous statement holds for maximum-weight perfect matchings, although the graphs witnessing tightness must be modified accordingly. However, it is unclear whether a single edge is sufficient in general to eliminate all minimum-weight perfect matchings or all maximum-weight perfect matchings by forcing that edge to be included. This raises the following question:

\begin{qu}\label{qu:origin}
    If the minimum and maximum weights of perfect matchings are different, does there exist an edge $f$ such that, among perfect matchings containing $f$, the minimum weight is strictly larger than the global minimum in $G$, or the maximum weight is strictly smaller than the global maximum in $G$?
\end{qu}
In particular, in the aforementioned application to the exact weight perfect matching problem, this weaker property -- the elimination of at least one extreme -- may already be sufficient. Motivated by this, we study the relations among several natural properties concerning the minimum and maximum weights of perfect matchings.

%%%%%%%%%%%%%%%%%%%%%%%%%%%%%%%%
\subsection{Our results}
%%%%%%%%%%%%%%%%%%%%%%%%%%%%%%%%

Let $G=(V,E)$ be a graph and let $w\colon E \to \mathbb{R}$ be an edge-weight function. Observe that there is a natural obstruction to an affirmative answer to Question~\ref{qu:origin}: if every edge $f\in E$ is contained in both a minimum-weight perfect matching and a maximum-weight perfect matching, then fixing any single edge cannot shift either extreme. This raises the question of understanding the relation between graphs in which all perfect matchings have the same weight and graphs in which every edge $f\in E$ is contained in both a minimum-weight perfect matching and a maximum-weight perfect matching. 

To this end, we consider the following properties of $w$.

\begin{symenum}
\itemsep0em
\itemsymbol{IND}\label{NIND} \textbf{Node-induced weights.}
The function $w$ is \emph{node-induced}, that is, there exists a function $f\colon V \to \mathbb{R}$ such that $w(uv)=f(u)+f(v)$ for all $uv\in E$.

\itemsymbol{EWS}\label{EWP} \textbf{Even walk symmetry.}
For every closed walk of even length, the total weight of the edges in odd positions equals that of the edges in even positions, counted with multiplicity.

\itemsymbol{ECS}\label{ECP} \textbf{Even cycle symmetry.}
For every even cycle $C$, the two perfect matchings of $C$ have equal total weight.

\itemsymbol{PME}\label{MEQ} \textbf{Perfect matching equality.}
All perfect matchings of $G$ have the same total weight.

\itemsymbol{EMX}\label{MMM} \textbf{Edge min-max.}
For every edge $e \in E$, there exist perfect matchings $A$ and $B$ containing $e$ such that $A$ has the minimum weight and $B$ has the maximum weight.
\end{symenum}

First, we establish a simple and basic hierarchy among these properties, showing that each condition is progressively weaker.

\begin{restatable}{thm}{thmtrivial}
\label{thm:trivial}
    Let $G=(V,E)$ be a matching-covered graph and let $w\colon E \to \mathbb{R}$ be an edge-weight function. Then \ref{NIND} implies \ref{EWP}, \ref{EWP} implies \ref{ECP}, \ref{ECP} implies \ref{MEQ}, and \ref{MEQ} implies \ref{MMM}.
\end{restatable}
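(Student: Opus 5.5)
We prove the four implications one at a time. Each is a short direct argument; the only subtle points are how closed walks and their positions are indexed, and where the matching-covered hypothesis enters.

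\textbf{IND $\Rightarrow$ EWS.} Let $W=(v_0,v_1,\dots,v_{2k}=v_0)$ be a closed walk of even length, with edges $e_i=v_{i-1}v_i$ for $i\in[2k]$. Write $w(e_i)=f(v_{i-1})+f(v_i)$. Each occurrence of a vertex $v_i$ ($i$ taken mod $2k$) is an endpoint of exactly two consecutive edges, $e_i$ and $e_{i+1}$. Since $2k$ is even, one of these has an odd index and the other an even index. Hence both the odd-position sum and the even-position sum equal $\sum_{i=1}^{2k} f(v_i)$, counted with multiplicity, and they coincide.

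\textbf{EWS $\Rightarrow$ ECS.} An even cycle $C$ traversed once is a closed walk of even length. Its odd-position edges form one perfect matching of $C$ and its even-position edges form the other. EWS therefore says exactly that these two matchings have equal weight.

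\textbf{ECS $\Rightarrow$ PME.} Let $M$ and $N$ be perfect matchings of $G$. The symmetric difference $M\triangle N$ is a disjoint union of even cycles that alternate between $M$ and $N$. On each such cycle $C$, the sets $M\cap C$ and $N\cap C$ are the two perfect matchings of $C$, so by ECS they have equal weight. The edges of $M\cap N$ contribute equally to both sides. Summing gives $w(M)=w(N)$. If $G$ has no perfect matching, PME holds vacuously, but a matching-covered graph has one.

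\textbf{PME $\Rightarrow$ EMX.} This is the only step that uses that $G$ is matching-covered. Every edge $e$ lies in some perfect matching $M$. By PME, $M$ is simultaneously a minimum-weight and a maximum-weight perfect matching, so we may take $A=B=M$.

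There is no real obstacle here. The only care needed is the parity bookkeeping in the first step: indices are taken cyclically, and the evenness of the walk length is what guarantees that consecutive edges around the closure point $v_{2k}=v_0$ also alternate in parity.
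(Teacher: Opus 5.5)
Your proof is correct and follows essentially the same route as the paper's: each node occurrence on an even closed walk lies in exactly one odd-indexed and one even-indexed edge, so both alternating sums equal $\sum_{i=1}^{2k} f(v_i)$; (EWS)$\Rightarrow$(ECS) is a direct observation on even cycles; (ECS)$\Rightarrow$(PME) decomposes $M\triangle N$ into alternating even cycles; and (PME)$\Rightarrow$(EMX) is the only step that uses the matching-covered hypothesis. Your explicit check that the cyclic indexing at $v_{2k}=v_0$ preserves the alternation is the same parity bookkeeping that the paper's reindexed sums carry out implicitly.
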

Note that Question~\ref{qu:origin} is related to the contrapositive of the last implication in this hierarchy, in the sense that it asks whether the failure of \ref{MMM} can always be certified by the existence of an edge witnessing the failure of \ref{MEQ}.

Next, we characterize node-induced weights in terms of even walk symmetry.

\begin{restatable}{thm}{thmmainfirst}
\label{thm:main1}
    Let $G=(V,E)$ be a graph and let $w\colon E \to \mathbb{R}$ be an edge-weight function. Then \ref{NIND} and \ref{EWP} are equivalent.
\end{restatable}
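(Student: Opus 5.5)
The direction \ref{NIND} $\Rightarrow$ \ref{EWP} is immediate and will only be recorded briefly. Write a closed walk of even length as $v_0 e_1 v_1 e_2 \dots e_{2k} v_{2k}=v_0$. Then
\[
\sum_i (-1)^i w(e_i)=\sum_{i=1}^{2k} (-1)^i\bigl(f(v_{i-1})+f(v_i)\bigr).
\]
Each vertex occurrence $v_i$ is counted once with sign $(-1)^i$ and once with sign $(-1)^{i+1}$, indices taken modulo $2k$; evenness of the length makes this wrap-around consistent. So the alternating sum telescopes to $0$. The real content is the converse, and the plan is to construct $f$ explicitly one connected component at a time.

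Fix a component $H$ and a root $r\in V(H)$. For a walk $P=v_0e_1v_1\dots e_mv_m$ define its alternating weight $\alpha(P)=\sum_{i=1}^m (-1)^{i+1}w(e_i)$. The key claim is that, for any vertex $v$, $\alpha(P)$ depends only on $v$ and the parity of $|P|$, over all $r$--$v$ walks $P$. To see this, take two $r$--$v$ walks $P,Q$ of the same parity. Then $P$ followed by the reverse of $Q$ is a closed walk of even length. Its alternating sum is $\alpha(P)-\alpha(Q)$, with the signs along the reversed part aligning because $|P|\equiv|Q|$. By \ref{EWP} this vanishes. Write $a_0(v)$ for the common value over even walks and $a_1(v)$ for the common value over odd walks, where these exist.

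Two cases follow. If $H$ is bipartite, with colour classes $R\ni r$ and $S$, every $r$--$v$ walk has a fixed parity. Set $f(r)=0$, $f(v)=-a_0(v)$ for $v\in R$ and $f(v)=a_1(v)$ for $v\in S$. For an edge $uv$ with $u\in R$, extending an even $r$--$u$ walk by $uv$ gives $a_1(v)=a_0(u)+w(uv)$, hence $w(uv)=f(u)+f(v)$. If $H$ is non-bipartite, both parities of walk exist to every vertex, and the closed odd walks pin $f$ down. Set $f(v)=\tfrac12\bigl(a_1(v)-a_0(v)\bigr)$. This is consistent with an odd closed walk $C$ at $r$ giving $2f(r)=\alpha(C)$, and $\alpha(C)$ is well defined since any two odd closed walks at $r$ have the same alternating weight by the claim. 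For an edge $uv$, appending it to walks ending at $u$ gives $a_1(v)=a_0(u)+w(uv)$ and $a_0(v)=a_1(u)-w(uv)$. Subtracting these yields
\[
a_1(v)-a_0(v)=-(a_1(u)-a_0(u))+2w(uv),
\]
that is, $w(uv)=f(u)+f(v)$. Distinct components are handled independently, which completes the construction.

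The main obstacle is the sign bookkeeping in these steps. Concretely, I must check that concatenating $P$ with the reverse of $Q$ produces exactly $\alpha(P)-\alpha(Q)$ when the lengths agree in parity, and that the two recurrences for $a_0,a_1$ have the right signs. I would verify both on a triangle, taking $f$ to be the half alternating weight of the triangle at each vertex, before writing the general argument.
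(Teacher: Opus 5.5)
Your proof is correct and follows essentially the same route as the paper. Both define $f$ through alternating sums along walks, get well-definedness from \ref{EWP} applied to the even closed walk formed by concatenating two walks of equal parity, and verify $f(u)+f(v)=w(uv)$ by appending the edge $uv$; your sign checks for the concatenation and for the recurrences $a_1(v)=a_0(u)+w(uv)$, $a_0(v)=a_1(u)-w(uv)$ are right. The only difference is bookkeeping: you use one root and the two parity classes $a_0,a_1$ for both cases, whereas in the non-bipartite case the paper defines $f(v)$ directly as half the alternating weight of an odd closed walk at $v$, which is exactly your $\tfrac12(a_1(v)-a_0(v))$.
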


It turns out that even cycle symmetry already forces a node-induced representation under an appropriate structural assumption.

\begin{restatable}{thm}{thmmainsecond}
\label{thm:main2}
Let $G=(V,E)$ be a graph.
\begin{enumerate}[label=(\roman*)]\itemsep0em
    \item If the block decomposition of $G$ contains at most one non-bipartite block, then every edge-weight function $w\colon E \to \mathbb{R}$ satisfying \ref{ECP} also satisfies \ref{NIND}. \label{it:2i}
    \item If the block decomposition of $G$ contains at least two non-bipartite blocks, then there exists an edge-weight function $w\colon E \to \mathbb{R}$ satisfying \ref{ECP} but not \ref{NIND}. \label{it:2ii}
\end{enumerate}
\end{restatable}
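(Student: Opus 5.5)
We prove part~\ref{it:2i} by first reducing, via Theorem~\ref{thm:main1}, to showing that \ref{ECP} implies \ref{EWP}. Equivalently, we show directly that $w$ is a potential difference in a suitable sense. Node-inducedness is local to blocks: if $w$ is node-induced on each block, the representations can be glued along cut vertices. We process the block-cut tree from a root block. Once $f$ has been chosen on one block, an adjacent block shares only the cut vertex $v$. On that block the representation is either unique (non-bipartite blocks) or unique up to adding $t$ on one color class and $-t$ on the other (bipartite connected blocks). In the bipartite case we choose $t$ so that $f(v)$ agrees with its already fixed value. In the non-bipartite case the value at $v$ is forced, which is why we must root the block-cut tree at the unique non-bipartite block, if any. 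Then every later block is bipartite and can be adjusted. So it suffices to prove that \ref{ECP} implies \ref{NIND} within a single $2$-connected block $B$.

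For a bipartite $2$-connected block, define $g(e)=\pm w(e)$ with sign by color class, oriented from one class to the other. \ref{NIND} is equivalent to $g$ being a potential difference, i.e.\ to every cycle having zero alternating sum. Every cycle is even, so this is exactly \ref{ECP}. For a non-bipartite $2$-connected block, the plan is to show that every even closed walk has zero alternating sum and then invoke Theorem~\ref{thm:main1}. The space of even closed walks (as alternating-signed edge vectors) is generated by even cycles together with ``odd cycle pairs'': two odd cycles $C_1,C_2$ joined by a path $P$, traversed as $C_1$, then $P$, then $C_2$, then $P$ back. Using $2$-connectivity (Menger), any such configuration can be replaced by two vertex-disjoint paths joining $C_1$ and $C_2$, or by odd cycles sharing vertices. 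The union then decomposes, through even cycles formed by the two paths and complementary arcs of $C_1,C_2$, into a combination of even cycles, whose alternating sums vanish by \ref{ECP}. The main obstacle is this decomposition step. I would treat the cases $|V(C_1)\cap V(C_2)|\ge 2$, $=1$, and $=0$ separately. In each case, choosing the arcs of appropriate parity in $C_1$ and $C_2$ yields two even cycles whose signed sum reproduces the figure-eight walk. Alternatively, one can use the ear-decomposition characterization of $2$-connected non-bipartite graphs and induct on ears, showing that \ref{ECP} determines a unique $f$ extending along each ear.

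For part~\ref{it:2ii}, take two non-bipartite blocks $B_1,B_2$ and a cut vertex $v$ separating them. Let $H$ be the side of $v$ containing $B_2$, and set $w(e)=1$ for the edges of $H$ incident to $v$, and $w=0$ elsewhere. Every cycle lies within a single block, and within each block the edges at $v$ appear in a cycle either zero or two times consecutively. Those two consecutive edges occupy positions of different parity, so their contributions cancel and \ref{ECP} holds. To see that \ref{NIND} fails, consider an even closed walk running from an odd cycle in $B_1$ to an odd cycle in $B_2$ through $v$ and back. Its alternating sum is nonzero, because the entry and exit edges at $v$ on the $H$ side fall at the same parity, due to the odd cycle traversed in between. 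A careful parity check of this walk is needed, and the weight may need to be placed on a single edge of $B_2$ at $v$ instead. Given a walk with nonzero alternating sum, \ref{EWP} fails, hence \ref{NIND} fails by Theorem~\ref{thm:main1}.
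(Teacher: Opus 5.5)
\textbf{Part~(i) has a genuine gap.} Your reduction to single blocks and your treatment of bipartite blocks via tensions are fine. The core case, a $2$-connected non-bipartite block, is not actually proved.

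The claim that even closed walks are generated by even cycles and ``odd cycle pairs'' is only asserted. The decomposition of such a pair into even cycles is a plan, not an argument:
\begin{itemize}
\item A given connecting path $P$ need not be one of two disjoint $C_1$--$C_2$ paths. In a $2$-connected graph, every other $C_1$--$C_2$ connection may pass through interior vertices of $P$. So you would still have to show that changing the connecting path changes the alternating vector only by an element of $\cC_G$.
\item The case where $C_1$ and $C_2$ share several vertices or edges is not worked out at all.
\end{itemize}
You name this yourself as the main obstacle, and it is where all the content of~(i) lies.

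\textbf{The missing idea} is the one that makes your one-line ``alternative'' work, and it is the one the paper uses: every new open ear lies on an even cycle.
\begin{itemize}
\item Start an open ear decomposition of the block at an odd cycle; on an odd cycle, $f$ is unique.
\item When an open ear $P$ with edges $e_1,\dots,e_k$ and ends $x\neq y$ is added to $G_i$, the values of $f$ inside $P$ are forced by $f(x)$.
\item The last edge is consistent if and only if $\sum_{j=1}^k(-1)^{j+1}w(e_j)=f(x)+(-1)^{k+1}f(y)$.
\item This identity follows from \ref{ECP} applied to the even cycle $P\cup Q$, where $Q$ is an $x$--$y$ path in $G_i$ with $|Q|\equiv k \pmod 2$, by telescoping $f$ along $Q$.
\item Such a $Q$ exists by Theorem~\ref{thm:even_odd}, or directly because in a $2$-connected non-bipartite graph any two vertices are joined by paths of both parities.
\end{itemize}
Without this fact, the claim that ``\ref{ECP} determines a unique $f$ along each ear'' is unsupported. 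The paper uses the same fact in dimension form: it shows $\dim\cC_{G_{i+1}}=\dim\cC_{G_i}+1$ at each ear, and hence $\cC_G=\cN_G^\perp$. Your ear induction, once completed, would be a constructive version of that count.

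\textbf{Part~(ii) is correct} and uses a different witness from the paper.
\begin{itemize}
\item The paper takes weights $1$ on $B_1$, $2$ on $B_2$ and $0$ elsewhere. It then compares the forced values $\tfrac12$ and $1$ across the bipartite blocks between them.
\item You put weight $1$ on the edges at a separating cut vertex $v$ on the $B_2$ side. Then $w$ is induced by $\chi_v$ on that side and by $0$ on the other. By uniqueness on connected non-bipartite graphs, the two forced values of $f(v)$ clash.
\end{itemize}
Your walk certificate also works as stated. Take $Q_2$ to be a shortest $v$--$C_2$ path. The two weighted edges at $v$ are then $2|Q_2|+|C_2|-1$ positions apart, which is even, so the alternating sum is $\pm 2$. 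No hedging is needed. The fallback of weighting a single edge at $v$ would in fact violate \ref{ECP} whenever that edge lies on an even cycle.
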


We identify a structural condition under which perfect matching equality forces node-induced weights.

\begin{restatable}{thm}{thmmainthird}
\label{thm:main3}
Let $G=(V,E)$ be a matching-covered graph.
\begin{enumerate}[label=(\roman*)]\itemsep0em
    \item If the tight cut decomposition of $G$ contains at most one brick, then every edge-weight function $w\colon E \to \mathbb{R}$ satisfying \ref{MEQ} also satisfies \ref{NIND}. \label{it:3i}
    \item If the tight cut decomposition of $G$ contains at least two bricks, then there exists an edge-weight function $w\colon E \to \mathbb{R}$ satisfying \ref{MEQ} but not \ref{NIND}.\label{it:3ii}
\end{enumerate}
\end{restatable}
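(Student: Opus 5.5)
Part (i) goes through the linear-algebraic picture of the matching lattice. The weights satisfying \ref{MEQ} are exactly the $w$ that are constant on the incidence vectors of all perfect matchings. Equivalently, $w$ is orthogonal to $\chi^{M}-\chi^{M'}$ for all perfect matchings $M,M'$. The node-induced weights form the space $\{\sum_v f(v)\delta(v)\}$, where $\delta(v)$ is the cut vector of the star at $v$. Every $w$ in this space satisfies \ref{MEQ}, since each perfect matching picks up exactly one edge of each star. So I will compare dimensions, working with the linear hull $\mathrm{lin}(\mathcal M)$ of perfect matching vectors inside $\bR^E$.

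\ref{MEQ} says that $w$ is orthogonal to the differences of perfect matchings. That space of differences has dimension $\dim\mathrm{lin}(\mathcal M)-1$, because $\mathrm{lin}(\mathcal M)$ contains the all-ones vector scaled appropriately and is not a difference space. Hence the space of \ref{MEQ} weights has dimension $|E|-\dim\mathrm{lin}(\mathcal M)+1$. I will use the Edmonds--Lov\'asz--Pulleyblank theorem (via the tight cut decomposition) that, for a matching-covered graph,
\[
\dim\mathrm{lin}(\mathcal M)=|E|-|V|+2-b,
\]
where $b$ is the number of bricks. The node-induced space has dimension $|V|-1$ if $G$ is bipartite and $|V|$ otherwise, since a connected graph has star vectors whose only linear dependence is the bipartition one. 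So the \ref{MEQ} space has dimension $|V|-1+b$.

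If $b=0$, then $G$ is bipartite (the tight cut decomposition consists only of braces), and the two spaces have dimension $|V|-1$. If $b=1$, then $G$ is non-bipartite (bricks are non-bipartite, and tight cut contractions of bipartite graphs are bipartite), and both spaces have dimension $|V|$. In either case inclusion plus equal dimension gives equality, which proves (i). For (ii), $b\ge 2$ makes the \ref{MEQ} space strictly larger, because the node-induced space has dimension at most $|V|$ while the \ref{MEQ} space has dimension at least $|V|+1$. Any $w$ in the difference satisfies \ref{MEQ} but not \ref{NIND}.

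The main obstacle is the dimension formula. I would cite it from Lov\'asz--Plummer / Edmonds--Lov\'asz--Pulleyblank rather than reprove it. Two checks are still needed: that the correction for the all-ones-type vector is right, and that $G$ is connected. Matching-covered graphs are connected by convention.

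If a self-contained argument is preferred, I would instead induct on tight cuts. Given a nontrivial tight cut $C=\partial(X)$, both contractions $G/X$ and $G/\bar X$ are matching-covered. An \ref{MEQ} weight on $G$ induces \ref{MEQ} weights on both contractions: perfect matchings of $G$ glue from pairs of contraction matchings agreeing on the single edge of $C$, and one shifts weights so that the contraction-vertex potentials match. Node-induced representations of the two sides can then be glued along $C$ by adjusting one constant. Bipartite contractions absorb an extra degree of freedom, which is exactly why a single brick is harmless, while a second brick leaves one free parameter. For (ii) that parameter is realized by adding $\varepsilon\chi^{C}$ on a tight cut separating two bricks: every perfect matching meets $C$ exactly once, so \ref{MEQ} still holds. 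Showing this $w$ is not node-induced uses that, on a non-bipartite side containing a brick, the vertex potentials are forced by an odd cycle.
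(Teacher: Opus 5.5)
Your proposal is correct and is essentially the paper's proof. Both show $\cN_G\subseteq\cM_G$, then compute $\dim\cM_G=|E|-(\dim\cP_G-1)=|V|-1+b(G)$ using the Edmonds--Lov\'asz--Pulleyblank (Lov\'asz) rank formula, and compare this with $\dim\cN_G$, which is $|V|-1$ or $|V|$ according to whether $b(G)=0$.

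One small fix: the difference space has codimension one in $\cP_G$ because the functional $x\mapsto x(\delta(v))$ equals $1$ on every $\chi_M$ and $0$ on every difference. The reason is not that $\cP_G$ contains a multiple of the all-ones vector; that claim is false for non-regular graphs, since every $x\in\cP_G$ satisfies $x(\delta(u))=x(\delta(v))$ for all $u,v$.
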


Finally, we give a negative answer to Question~\ref{qu:origin}, showing that the obstruction in Theorem~\ref{thm:ESA2025} is tight already for $\ell=2$: even if the goal is to exclude all minimum-weight perfect matchings or all maximum-weight perfect matchings, one must specify two edges.

\begin{restatable}{thm}{thmmainfourth}
\label{thm:main4}
    There exists a matching-covered graph $G=(V,E)$ with an edge-weight function $w\colon E \to \mathbb{R}$ such that \ref{MMM} holds but \ref{MEQ} fails.
\end{restatable}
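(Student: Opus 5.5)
By Theorem~\ref{thm:main3}\ref{it:3i}, any counterexample must be a non-bipartite matching-covered graph whose tight cut decomposition contains at least two bricks. By the bipartite equivalence announced in the abstract, it must also not be bipartite. So I would build $G$ from two small bricks, copies of $K_4$, glued across a tight cut. The point of the tight cut is that each perfect matching of $G$ crosses the cut in exactly one edge. A perfect matching of $G$ then corresponds to a \emph{compatible pair} $(M_1,M_2)$, where $M_1$ and $M_2$ are perfect matchings of the two cut-contractions that agree on the crossing edge. Its weight splits as $w(M_1)+w(M_2)$, after assigning the weight of the crossing edges to one side.

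The naive splice of two $K_4$'s gives the triangular prism, and that fails. There every triangle edge lies in a unique perfect matching, so \ref{MMM} forces all perfect matchings to have equal weight. The failure comes from each edge having too few perfect matchings through it. I would therefore insert a bipartite "connector" between the two bricks, for instance a small brace or an even path or ladder structure. This should create several compatible pairings between the perfect matchings on the two sides, so that every edge lies in at least two perfect matchings of different types.

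Concretely, I would index the possible crossing choices on the two sides by sets $I$ and $J$ and record which pairs $(i,j)\in I\times J$ are realisable as a relation $R\subseteq I\times J$. The weights should then have the form $w(M)=g(i)+h(j)$, with $g$ and $h$ prescribed through the weights of a few edges near each brick. The goal is a relation $R$, for example a $6$-cycle pattern of compatibilities, and values $g,h$ such that
\[
g(i)+h(j) \text{ is not constant on } R,
\]
yet every $i\in I$ and every $j\in J$ appears both in a pair achieving $\min_R(g+h)$ and in a pair achieving $\max_R(g+h)$. A cyclic pattern is the natural candidate. With $g=(0,1,0)$ and $h=(0,0,1)$ arranged around the cycle, each index should meet both extreme values, while the sum still takes more than one value.

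Once the combinatorial pattern is fixed, I would realise it in an explicit small graph with weights in $\{0,\pm1\}$. Then I would verify three things by direct enumeration of the perfect matchings, which should number roughly a dozen. First, $G$ is matching-covered, which can be checked by exhibiting a perfect matching through each edge. Second, at least two distinct perfect matching weights occur, so \ref{MEQ} fails. Third, for every edge $e$ there is a minimum-weight and a maximum-weight perfect matching containing $e$, so \ref{MMM} holds.

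The main obstacle is the second step: designing the connector and the weights so that the edges \emph{inside} each brick, and not just the crossing indices, each lie in both a minimum and a maximum perfect matching. Edges inside a $K_4$ tend to be tied to a single crossing choice, as in the prism. I expect to need a larger brick, for instance the prism itself or the Petersen graph, or a small computer search over candidate graphs and $\{0,\pm1\}$-weights, before settling on a verified example.
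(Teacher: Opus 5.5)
Your proposal has a genuine gap. It never produces a graph and weight function, and an existence theorem needs one. More seriously, the mechanism you set up cannot produce a counterexample. Suppose the weight of a perfect matching is $g(i)+h(j)$ for $(i,j)\in R$, with minimum $\mu$ and maximum $\Lambda$ over $R$, and suppose every index lies in both a minimum pair and a maximum pair.

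\begin{itemize}
\item Let $i^*$ maximize $g$. It lies in a minimum pair $(i^*,j)$, so $\min h\le h(j)=\mu-\max g$.
\item Let $j^*$ minimize $h$. It lies in a maximum pair $(i,j^*)$, so $\Lambda=g(i)+h(j^*)\le \max g+\min h\le \mu$.
\end{itemize}

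Hence $g+h$ is constant on $R$, for every relation $R$. In particular your $6$-cycle with $g=(0,1,0)$, $h=(0,0,1)$ must fail. There every index has degree two, so minimum and maximum pairs would have to alternate, and both alternating classes sum to $\sum g+\sum h$. So whenever the weight is determined by the crossing data, asking each index to see both extremes already forces \ref{MEQ}. The variation must come from perfect matchings with the same crossing data but different weights inside a piece. That is exactly the obstacle you leave open, with a computer search as the fallback.

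Your opening reduction is also a misreading. Theorem~\ref{thm:main3}\ref{it:3i} says that, with at most one brick, \ref{MEQ} implies \ref{NIND}. It says nothing about \ref{MMM} versus \ref{MEQ}. In fact the paper's counterexample is itself a single brick (it is bicritical and $3$-connected), so no tight cut is needed. Tight cuts can be added afterwards, e.g.\ by splicing with a zero-weighted copy, but they contribute nothing to the phenomenon.

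The missing idea is to use an odd cut that is \emph{not} tight. The paper's construction is:
\begin{itemize}
\item $V_1,V_3$ are triangles and $V_2,V_4$ are independent pairs;
\item all edges between $V_i$ and $V_{i+1}$ are added, indices cyclic;
\item $w=\chi_{\delta(S)}$ with $S=V_1\cup V_2$.
\end{itemize}
Every edge of $\delta(S)$ meets the four-node set $V_2\cup V_4$, and $|S|=5$ is odd. So every perfect matching has weight $1$ or $3$, and a direct check shows every edge lies in perfect matchings of both weights. Structurally, $w(M)=4-|M\cap\delta(V_1\cup V_4)|$ for every perfect matching $M$. Thus the minimum and maximum are attained exactly where the odd cuts $\delta(V_1\cup V_2)$, respectively $\delta(V_1\cup V_4)$, are crossed once. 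Both extreme faces are odd-set faces that contain full-support points, which cannot happen in bipartite graphs.
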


We then show that the behavior observed in Theorem~\ref{thm:ESA2025} is not specific to perfect matchings and also appears in other combinatorial optimization settings, such as $b$-factors and arborescences.

\begin{restatable}{thm}{thmbfacs}
\label{thm:b-factor}
    Let $G$ be an edge-weighted graph with a node-capacity function $b\colon V \to \mathbb{Z}_+$, for which the possible weights of $b$-factors are $x_{1} < x_{2} < \dots < x_{q}$ and let $\ell\in[q]$.
    \begin{enumerate}[label=(\roman*)]\itemsep0em
        \item If $G$ is bipartite, there exist edge sets $F$ and $D$ with $F\cap D = \emptyset$ and $|F| + |D| \leq \ell -1$ such that the minimum weight of a $b$-factor $S$ satisfying $F\subseteq S \subseteq E\setminus D$ is $x_\ell$.
        \item If $G$ is non-bipartite, there exist edge sets $F$ and $D$ with $F\cap D = \emptyset$ and $|F| + |D| \leq 2(\ell -1)$ such that the minimum weight of a $b$-factor $S$ satisfying $F\subseteq S \subseteq E\setminus D$ is $x_\ell$.
    \end{enumerate}
\end{restatable}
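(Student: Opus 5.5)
The plan is to reduce Theorem~\ref{thm:b-factor} to Theorem~\ref{thm:ESA2025} via the standard gadget reduction from $b$-factors to perfect matchings, and then to translate the forcing set back. If that translation is too lossy, the fallback is to rerun the exchange argument of El Maalouly et al.\ directly on $b$-factors.

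For the reduction I would build the auxiliary graph $H$ as follows. Each node $v$ is replaced by $\deg(v)$ copies indexed by its incident edges, together with $\deg(v)-b(v)$ extra ``slack'' nodes, and each slack node is joined to every copy of $v$. Each edge $e=uv$ is replaced by a path $u_e - x_e - y_e - v_e$, whose middle edge $x_ey_e$ gets weight $0$ and whose two end edges get weight $w(e)/2$.

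With this construction, perfect matchings of $H$ correspond to $b$-factors $S$ of $G$:
\begin{itemize}
\item if $e\in S$, the matching uses $u_ex_e$ and $y_ev_e$, of total weight $w(e)$;
\item if $e\notin S$, it uses $x_ey_e$, of weight $0$, and the copies $u_e,v_e$ are covered by slack nodes.
\end{itemize}
Several matchings map to the same $S$, depending on how slack nodes are assigned, but all of them have weight $w(S)$. Hence the set of distinct perfect matching weights of $H$ is exactly $\{x_1,\dots,x_q\}$. Moreover, $H$ is bipartite whenever $G$ is bipartite: put $u_e,y_e$ and the slack nodes of the other colour class on one side, and $x_e,v_e$ and the slack nodes of $u$'s class on the other.

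Next, I would apply Theorem~\ref{thm:ESA2025} to $H$. This gives a set $F_H$ of at most $\ell-1$ edges (bipartite case) or $2(\ell-1)$ edges (general case) such that the minimum weight of a perfect matching of $H$ containing $F_H$ is $x_\ell$. Each edge of $F_H$ is then translated to a constraint on $G$:
\begin{itemize}
\item an edge $u_ex_e$ or $y_ev_e$ forces $e\in S$, so $e$ goes into $F$;
\item an edge $x_ey_e$ forces $e\notin S$, so $e$ goes into $D$;
\item an edge between a slack node and $u_e$ forces $e\notin S$, so $e$ goes into $D$.
\end{itemize}
In every case the image of $F_H$ is a single edge constraint, so $|F|+|D|\le |F_H|$, and $F\cap D=\emptyset$ because the constraints come from a feasible perfect matching.

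The main obstacle is that the constraints $(F,D)$ must be equivalent to $F_H$ in terms of the optimum value. One direction is clear: any matching containing $F_H$ projects to a $b$-factor respecting $(F,D)$, so the minimum over $b$-factors respecting $(F,D)$ is at most $x_\ell$. The other direction also needs care. Given a $b$-factor $S$ respecting $(F,D)$, we must realise it by a matching containing $F_H$, and a prescribed slack edge fixes which slack node covers $u_e$. Since the slack nodes at $v$ are interchangeable (they are twins in $H$), any realisation can be permuted so that it uses the prescribed slack edges, provided those prescribed edges are disjoint, which they are, being part of a matching. Hence the minimum weight over $b$-factors with $F\subseteq S\subseteq E\setminus D$ equals the minimum weight over perfect matchings of $H$ containing $F_H$, which is $x_\ell$. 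I expect verifying this twin-permutation step, and the bipartiteness of $H$, to be the only delicate points.
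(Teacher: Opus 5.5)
Your proposal is correct and follows essentially the same route as the paper. Both reduce $b$-factors to perfect matchings via a gadget with a weight-$0$ middle edge and weight-$w(e)/2$ end edges, apply Theorem~\ref{thm:ESA2025} to the auxiliary graph, and translate each forced edge into a single inclusion or exclusion constraint. The only difference is the gadget: you use Tutte's edge-indexed copies with $\deg(v)-b(v)$ slack nodes, so slack edges (which you should explicitly assign weight $0$) translate into $D$, whereas the paper uses $b(v)$ interchangeable copies of $v$ joined to every $p_{e,v}$. Your twin-permutation argument also spells out a realisability step that the paper leaves implicit.
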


\begin{restatable}{thm}{thmarbs}
\label{thm:arbs}
    Let $D$ be a root-connected digraph with edge costs, and suppose that the possible costs of $r$-arborescences are
    $x_1 < x_2 < \dots < x_q$, and let $\ell \in [q]$. Then there exists a subset of edges $P$ with $|P| \le 2(\ell - 1)$ such that the minimum cost of an $r$-arborescence containing $P$ is $x_\ell$.
\end{restatable}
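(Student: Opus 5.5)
I would prove Theorem~\ref{thm:arbs} by induction on $\ell$, following the strategy behind Theorem~\ref{thm:ESA2025}. The induction maintains a set $P_k$ of at most $2(k-1)$ edges such that the minimum cost of an $r$-arborescence containing $P_k$ is exactly $x_k$, and in addition some minimizer has cost $x_k$ while some arborescence containing $P_k$ has cost $x_{k+1}$. For $\ell=1$ we take $P_1=\emptyset$. The inductive step adds at most two edges: it must exclude every arborescence of cost $x_k$ that contains $P_k$ while keeping one of cost $x_{k+1}$.

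\textbf{Dual certificate.} Fix $k$ and write $\mathcal{A}$ for the family of $r$-arborescences containing $P_k$. Contracting the edges of $P_k$ (or fixing them via in-degree constraints) shows that $\mathcal{A}$ is again the set of arborescences of a smaller root-connected digraph. Edmonds' theorem then gives an optimal laminar dual $y\ge 0$ on the non-root vertex sets. The reduced costs $\bar c(e)=c(e)-\sum_{X:\,e\in\delta^{in}(X)} y_X$ are nonnegative. Every $A\in\mathcal{A}$ satisfies
\[
c(A)=x_k+\bar c(A)+\sum_X y_X\bigl(|A\cap\delta^{in}(X)|-1\bigr).
\]
Hence $A$ is optimal if and only if it uses only $\bar c$-zero edges and enters every set $X$ with $y_X>0$ exactly once. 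An arborescence of cost $> x_k$ must therefore contain a positive reduced-cost edge or enter some tight set at least twice.

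\textbf{Exchange lemma.} The key claim is the following. Let $S\in\mathcal{A}$ be optimal, and let $T\in\mathcal{A}$ have cost $x_{k+1}$, chosen with $|T\setminus S|$ minimal. Then $|T\setminus S|\le 2$. To prove it, I would view arborescences as common bases of the graphic matroid and the partition matroid given by in-degree one at each non-root vertex. The difference $T\triangle S$ then decomposes in the exchange graph into alternating cycles. I would try to show that a single swap $S-f+e$, or a double swap $S-f-f'+e+e'$ with $e,e'\in T\setminus S$, already yields an arborescence $T'$ with $x_k<c(T')\le c(T)$. The double swap is needed exactly when inserting $e$ closes a directed cycle, which then has to be broken by re-routing one more in-edge. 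By the cost identity above, $T'$ has cost strictly above $x_k$, so its cost equals $x_{k+1}$ by the choice of $T$ and the fact that no cost lies strictly between $x_k$ and $x_{k+1}$ in $\mathcal{A}$. Minimality of $|T\setminus S|$ then gives the bound.

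\textbf{Inductive step and main obstacle.} Set $P_{k+1}=P_k\cup(T\setminus S)$, which has at most $2k$ edges. I must show that no arborescence containing $P_{k+1}$ has cost $x_k$. The added edges are chosen so that either one of them has positive reduced cost, or together they enter a tight set twice; either way the cost identity forces every arborescence containing them to cost more than $x_k$, hence at least $x_{k+1}$. Equality is witnessed by $T$, so the minimum over arborescences containing $P_{k+1}$ is exactly $x_{k+1}$. One also has to check that $x_{k+1}$ here coincides with the global $x_{k+1}$. This holds by the same argument as in \cite{maalouly2025finding}: any arborescence whose cost lies strictly between consecutive attained values could be exchanged towards $S$, and that contradicts minimality. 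I expect the main obstacle to be the exchange lemma, specifically proving that two edges always suffice. Unlike perfect matchings, arborescence symmetric differences do not split into independent alternating cycles, and I would handle this by a careful case analysis on the laminar family of tight sets.
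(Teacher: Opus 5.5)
Your plan has a genuine gap: each ingredient of the inductive step fails.

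First, the rule $P_{k+1}=P_k\cup(T\setminus S)$ does not exclude the cost-$x_k$ arborescences. The edges certifying that $T$ is not optimal may include an edge of $S\cap T$. Take nodes $r,a,b$ with $c(ra)=c(rb)=1$ and $c(ab)=c(ba)=0$. The arborescences are $\{ra,ab\}$ and $\{rb,ba\}$ of cost $1$, and $T=\{ra,rb\}$ of cost $2$. For either optimal $S$, the set $T\setminus S$ is a single edge, and every single edge lies in an arborescence of cost $1$. The actual certificate is that $T$ enters $\{a,b\}$ (dual value $1$) twice, via $ra$ and $rb$, and one of these always belongs to $S$.

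Second, your exchange lemma is false for an arbitrary optimal $S$. On nodes $r,a,b,c$, take edges $ra,ab,bc,rc,cb$ of cost $0$ and $ba$ of cost $1$. The arborescences are $\{ra,ab,bc\}$, $\{ra,ab,rc\}$, $\{ra,cb,rc\}$ of cost $0$ and $\{rc,cb,ba\}$ of cost $1$. So for $S=\{ra,ab,bc\}$, the only $T$ of cost $x_2$ has $|T\setminus S|=3$. Breaking a created cycle can require rerouting more than one further in-edge, so no case analysis on the laminar family will give the bound.

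Third, and most fundamentally, nothing maintains your invariant that some arborescence containing $P_k$ has cost $x_{k+1}$, and it can fail. Take nodes $r,a,a',b,b'$, edges $ra',rb',ra,rb$ of cost $0$, $a'a$ of cost $1$, and $b'b$ of cost $3$. The arborescence costs are $0,1,3,4$. Your step gives $P_2=\{a'a\}$, but the arborescences containing $a'a$ cost only $1$ and $4$. Hence no superset of $P_2$ has minimum $x_3=3$, and climbing one value at a time stalls.

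The missing idea is to fix the target in advance. Let $F$ be an arborescence of cost $x_\ell$, and always take the new edges from $F$: either a non-tight edge of $F$ (plus any other edge of $F$), or two edges of $F$ entering the same set $Z$ with $y^\star(Z)>0$. Every arborescence containing them costs more than $x_1$, yet $F$ survives. Now contract them, deleting the other edges into their heads. Since all costs of arborescences containing $\{e,f\}$ lie in $\{x_2,\dots,x_q\}$, $F\setminus\{e,f\}$ is an $\ell'$-th smallest arborescence of the contracted digraph for some $\ell'<\ell$. Induction on $\ell$ then gives $|P|\le 2(\ell'-1)+2\le 2(\ell-1)$. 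This is the paper's proof; it uses only the Bock--Fulkerson dual certificate and needs no bound on $|T\setminus S|$.
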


%%%%%%%%%%%%%%%%%%%%%%%%%%%%%%%%
\subsection{Paper organization}
\label{sec:organization}
%%%%%%%%%%%%%%%%%%%%%%%%%%%%%%%%

The rest of the paper is organized as follows. In Section~\ref{sec:preliminaries}, we introduce basic definitions and notation, and overview some results that we will use in our proofs. Section~\ref{sec:bipartite} proves Theorem~\ref{thm:trivial} and establishes the equivalence of the properties in question for matching-covered bipartite graphs. Section~\ref{sec:impliations} is devoted to the main results of the paper, Theorems~\ref{thm:main1}--\ref{thm:main4}. Finally, we extend Theorem~\ref{thm:ESA2025} to $b$-factors and to arborescences in Section~\ref{sec:bfacandarbs}.

%%%%%%%%%%%%%%%%%%%%%%%%%%%%%%%%
\section{Preliminaries}
\label{sec:preliminaries}
%%%%%%%%%%%%%%%%%%%%%%%%%%%%%%%%

We provide the basic definitions and notation here; all additional terminology is introduced when first needed.

\paragraph{Basic notation.} We denote the sets of \emph{reals} and \emph{integers} by $\bR$ and $\bZ$, respectively, and add $+$ as a subscript when restricted to nonnegative values. For a positive integer $k$, we use $[k]\coloneqq\{1,\dots,k\}$. Given a ground set $S$, the \emph{difference} of $X,Y\subseteq S$ is denoted by $X\setminus Y$. If $Y$ consists of a single element $y$, then $X\setminus \{y\}$ and $X\cup \{y\}$ are abbreviated as $X-y$ and $X+y$, respectively. Similarly, the single element set $\{y\}$ is often denoted by $y$. The \emph{characteristic vector} of $Y$ is denoted by $\chi_Y$, that is, $\chi_Y\in\{0,1\}^S$ with $\chi_Y(s)=1$ if $s\in Y$ and $0$ otherwise.  

For a vector space $V$, we denote its \emph{dimension} by $\dim V$. For a set $X\subseteq \bR^n$, the \emph{span} of $X$ is denoted by $\spa(X)$. Given a subspace $W\subseteq \bR^n$, its \emph{orthogonal complement} is denoted by $W^\perp$, that is, $W^\perp \coloneqq \{x\in \bR^n \colon x\cdot y = 0 \text{ for all } y\in W\}$, where $x \cdot y$ denote the standard inner product of $x$ and $y$. For a linear map $\varphi\colon\bR^m\to\bR^n$, we denote the \emph{kernel} and the \emph{image} of $\varphi$ by $\ker\varphi$ and $\im\varphi$, respectively. Recall that, by the dimension theorem, we have $\dim (\ker \varphi) + \dim (\im \varphi) = m$.

\paragraph{Graphs.} For a graph $G$, we denote its node set and edge set by $V(G)$ and $E(G)$, respectively. For two graphs $G_1$ and $G_2$, we say that $G_1$ is a \emph{subgraph} of $G_2$ if $V(G_1)\subseteq V(G_2)$ and $E(G_1)\subseteq E(G_2)$, and denote this by $G_1\subseteq G_2$. Let $G=(V,E)$ be a graph and let $X\subseteq V$. The subgraph of $G$ \emph{induced by $X$} is denoted by $G[X]$, while the \emph{graph obtained by deleting $X$} is denoted by $G-X$. 

A \emph{walk} is a sequence $W = (v_0, e_1, v_1, \dots, e_k, v_k)$ where each $e_i = v_{i-1}v_i \in E$. A walk may repeat nodes and edges. If $v_0 = v_k$, then $W$ is called a \emph{closed walk}. A \emph{path} is a walk in which all nodes, and hence all edges, are distinct, except possibly $v_0 = v_k$ in the case of a closed path which we usually call a \emph{cycle}. We will use the following result from~\cite{little2006parity}.

\begin{thm}[Little, Vince]\label{thm:even_odd}
    Let $C$ be an odd cycle in a $2$-connected graph $G$, and let $e \in E(G) \setminus E(C)$. Then $G$ contains both an even and an odd cycle through $e$ that also contains an edge of $C$.
\end{thm}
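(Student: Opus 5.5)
The plan is to reduce the statement to a parity argument on the two arcs of $C$. Write $e=uv$. I will find two vertex-disjoint paths $P$ and $Q$ with the following properties:
\begin{itemize}
\item $P$ starts at $u$ and $Q$ starts at $v$;
\item they end at distinct nodes $x,y\in V(C)$;
\item they meet $V(C)$ only at their final nodes.
\end{itemize}
A path may be trivial: if $u\in V(C)$, then $P=(u)$ and $x=u$, and similarly for $v$.

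Given such paths, the nodes $x\neq y$ split $C$ into two internally disjoint $x$--$y$ paths $R_1$ and $R_2$. Each has at least one edge, and $|E(R_1)|+|E(R_2)|=|E(C)|$ is odd, so $R_1$ and $R_2$ have different parities. The closed walks $e\cup P\cup R_i\cup Q$ for $i=1,2$ are cycles, because $P$, $Q$ and $R_i$ pairwise meet only at the designated endpoints. Their lengths are $1+|E(P)|+|E(Q)|+|E(R_i)|$, which have opposite parities. Both cycles contain $e$, and each contains at least one edge of $C$ (from $R_i$). This gives the required even cycle and odd cycle through $e$.

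To obtain $P$ and $Q$, I will apply Menger's theorem to the sets $A=\{u,v\}$ and $B=V(C)$. Both sets have size at least two, and since $G$ is $2$-connected, no single node separates $A$ from $B$. Hence there are two disjoint $A$--$B$ paths. Truncating each at its first node in $B$ makes them meet $C$ only at their ends. Because the paths are disjoint and $|A|=2$, one of them starts at $u$ and the other at $v$. Their endpoints in $B$ are distinct for the same reason, which gives $x\neq y$.

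The degenerate cases are absorbed by the same argument. If $e$ is a chord of $C$, both paths are trivial and $x=u$, $y=v$ are distinct. If exactly one endpoint lies on $C$, say $u\in V(C)$, then $P=(u)$ is trivial, while $Q$ avoids $u$ by disjointness. The main point requiring care is checking that Menger's theorem for two node sets in a $2$-connected graph yields disjoint paths with distinct starting and ending nodes. Everything after that is the parity count above.
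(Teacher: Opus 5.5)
Your proof is correct. The paper does not prove this statement at all: it quotes it from Little and Vince and uses it as a black box, needing only the even-cycle half, for open ears in Case~2 of the proof of Theorem~\ref{thm:ear_decomposition_main}. So there is no in-paper route to compare against. Your argument (Menger's theorem to link $u$ and $v$ disjointly to $C$, then the parity of the two arcs of $C$) is the standard self-contained proof, and it would make that step independent of the citation.

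The step you flag as needing care does go through. Take any node $z$. The graph $G-z$ is connected and contains a node of $\{u,v\}\setminus\{z\}$ and a node of $V(C)\setminus\{z\}$ (as $|V(C)|\ge 3$). Hence $\{z\}$ is not a $\{u,v\}$--$V(C)$ separator, and Menger's theorem gives two disjoint $\{u,v\}$--$V(C)$ paths. Use the standard convention that such a path meets $\{u,v\}$ only in its first node and $V(C)$ only in its last node. Then no truncation is needed, and disjointness directly gives distinct starts $u,v$ and distinct ends $x\neq y$; a path starting on $C$ is trivial.

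One detail is worth writing out. Neither $P$ nor $Q$ can contain $e$, since a path through $e=uv$ contains both $u$ and $v$, contradicting disjointness. Together with $e\notin E(C)$, this ensures $e$ occurs exactly once in each of your two closed walks, so they are genuine cycles.
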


A \emph{matching} is a set of edges such that no two edges share a common endpoint. A matching is called \emph{perfect} if every node is incident to an edge in the matching. We say that a connected graph is \emph{matching-covered} if every edge is contained in a perfect matching. 

For a node subset $S \subseteq V$, the cut $\delta(S)$ is defined as the set of edges having exactly one endpoint in $S$. The sets $S$ and $V \setminus S$ are referred to as the \emph{shores} of the cut. A cut is termed \emph{odd} if the cardinality of both of its shores is odd. An odd cut $C = \delta(S)$ is considered \emph{tight} if it is nontrivial -- meaning $1 < |S| < |V| - 1$ -- and every perfect matching $M$ of $G$ contains exactly one edge from $C$ (that is, $|M \cap C| = 1$).

Given a nontrivial cut $C = \delta(S)$, the \emph{$C$-contractions} (or \emph{cut-contractions}) of $G$, conventionally denoted as $G/S$ and $G/(V \setminus S)$, are the graphs obtained by shrinking the shores $S$ and $V \setminus S$ into single nodes, respectively. A fundamental structural property is that whenever $G$ is a matching-covered graph and $C$ is a tight cut, both resulting $C$-contractions are guaranteed to remain matching-covered. 

\paragraph{Tight cut decompositions.} A matching-covered graph that does not admit any tight cuts is called a \emph{brick} if it is non-bipartite, and a \emph{brace} if it is bipartite. Whenever a matching-covered graph $G$ contains a tight cut $C$, it can be split into its two corresponding $C$-contractions. The recursive application of this process eventually yields a collection of bricks and braces, which cannot be decomposed further. This recursive procedure is called a \emph{tight cut decomposition}. A fundamental theorem by Lovász and Plummer~\cite{lovasz1986matching} guarantees that the final collection of bricks and braces is independent of the sequence in which the tight cuts are selected for contraction. Since the $C$-contraction operation preserves bipartiteness, a graph is bipartite if and only if its tight cut decomposition contains no bricks.

\paragraph{Ear decompositions.} Let $G$ be a graph and let $G'$ be a subgraph of $G$. A path $P$ is called an \emph{ear} of $G'$ if its endnodes belong to $G'$, while all internal nodes lie in $V(G)\setminus V(G')$. An ear is called \emph{odd} if it contains an odd number of edges. We say that an ear is \emph{closed} if its two endnodes coincide, and \emph{open} otherwise. 

An \emph{ear decomposition} of $G$ is a sequence of subgraphs
\[
G_{0} \subset G_{1} \subset \dots \subset G_{k} = G
\]
where $G_0$ is a cycle (which is not considered an ear), and for each $i \in \{0, \dots, k-1\}$, the graph $G_{i+1}$ is obtained from $G_i$ by adding an ear $P_{i+1}$ of $G_i$. 

A \emph{block} of $G$ is a maximal subgraph $B\subseteq G$ such that any two nodes in $B$ lie on a common cycle, or $B$ consists of a cut-edge together with its endpoints. The blocks of a graph partition its edge set, and the intersection of any two distinct blocks is either empty or a single cut-node; thus the blocks form a tree-like structure. The following well-known result is due to Whitney~\cite{whitney1931non} and Robbins~\cite{robbins1939theorem}.

\begin{thm}[Robbins, Whitney]\label{thm:whitney}
   A graph admits an ear decomposition if and only if it is 2-edge-connected, and an open ear decomposition if and only if it is 2-connected. Furthermore, for any 2-edge-connected graph $G$, there exists an ear decomposition starting from an arbitrary block $B_0$ and any cycle $C\subseteq B_0$ as the first ear, such that a closed ear is introduced if and only if the decomposition enters a new block.
\end{thm}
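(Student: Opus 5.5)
We prove the three assertions in order: the two characterizations first, then the refined statement about closed ears.

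\emph{First equivalence.} If $G$ has an ear decomposition $G_0\subset\dots\subset G_k=G$, we show by induction that each $G_i$ is 2-edge-connected. The cycle $G_0$ is 2-edge-connected. Suppose $G_i$ is 2-edge-connected and $G_{i+1}$ is obtained by adding the ear $P_{i+1}$. Each old edge still lies on a cycle of $G_i$. Each new edge lies on the cycle formed by $P_{i+1}$ together with a path in $G_i$ between the endnodes of $P_{i+1}$; if the ear is closed, $P_{i+1}$ is itself a cycle. Since every edge lies on a cycle, $G_{i+1}$ has no bridge.

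Conversely, let $G$ be 2-edge-connected, hence connected and containing a cycle, and let $H$ be a maximal subgraph that admits an ear decomposition. Suppose $H\neq G$.
\begin{itemize}
\item If some edge $uv\notin E(H)$ has both ends in $V(H)$, it is an ear of length one, contradicting maximality.
\item Otherwise connectivity gives an edge $uv$ with $u\in V(H)$ and $v\notin V(H)$. Since $uv$ is not a bridge, there is a path from $v$ to $V(H)$ avoiding $uv$. Stop it at its first node in $V(H)$ and prepend $uv$. The result is an ear of $H$, closed if the path returns to $u$, again contradicting maximality.
\end{itemize}

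\emph{Second equivalence.} The open version follows the same two steps. Adding an open ear to a 2-connected graph creates no cut-node: deleting any single node leaves the new path attached to the rest through at least one of its two distinct endnodes. Conversely, if $G$ is 2-connected, the path from $v$ back to $V(H)$ can be chosen to avoid the node $u$, since $G-u$ is connected. Then the two endnodes of the ear differ, so the ear is open.

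\emph{The refined statement.} Since $G$ is 2-edge-connected it has no bridges, so every block is 2-connected. We build the decomposition block by block, maintaining the invariant that the current subgraph $H$ is $H'\cup K$. Here $H'$ is the union of some fully added blocks forming a connected subtree of the block tree, and $K$ is a 2-connected subgraph of a single block $B$.

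\begin{itemize}
\item \emph{Start.} Begin with $H=C\subseteq B_0$, so $H'$ is empty and $K=C$.
\item \emph{Completing a block.} While $K\neq B$, apply the open-ear argument inside the 2-connected graph $B$, taking $K$ as the current subgraph. This yields an open ear of $K$ lying in $B$. Its internal nodes avoid $H'$, because a block meets the other blocks only in cut-nodes, and those already belong to $K$ or do not lie on $B$'s new part. So it is also an open ear of $H$.
\item \emph{Entering a new block.} Once $K=B$, set $H'\leftarrow H'\cup B$. Choose a block $B'$ not yet added that shares a cut-node $v$ with $H'$. Take any cycle of $B'$ through $v$; it is a closed ear of $H$ at $v$, since $B'$ meets $H'$ only in $v$. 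Set $K$ to this cycle and repeat.
\end{itemize}

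This gives the required decomposition, with closed ears used exactly when entering a new block. It remains to justify the ``only if'' direction. An open ear with endnodes $a\neq b$ in the current graph, together with an $a$--$b$ path in it, forms a cycle. That cycle lies inside one block, so the ear cannot enter a new block. Conversely, when we enter $B'$, it meets everything built so far in the single node $v$. Hence any ear containing edges of $B'$ has both endnodes at $v$ and must be closed.

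The main obstacle is this bookkeeping in the last step. It requires the block-tree facts (a block meets the rest of the graph only in cut-nodes, and any cycle lies in a single block) to show that open ears stay inside one block and that new blocks force closed ears.
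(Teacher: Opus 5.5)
The paper gives no proof of this statement: it quotes it as a classical result of Whitney and Robbins and uses it as a black box in Theorem~\ref{thm:ear_decomposition_main}. Your proposal is a correct self-contained proof along the standard lines:
\begin{itemize}
\item every edge of an ear-decomposable graph lies on a cycle, and open ears create no cut-node;
\item a maximal ear-decomposable subgraph can always be extended, using a non-bridge edge or, in the 2-connected case, connectivity of $G-u$;
\item the refined form follows by exhausting each block with open ears before entering an adjacent block of the block tree through a cycle at the attaching cut-node.
\end{itemize}
Your construction also has a useful by-product. Every open ear is added to a 2-connected subgraph $K$ of a single block containing that block's first cycle. This is what Case~2 of Theorem~\ref{thm:ear_decomposition_main} actually needs. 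There, for an open ear in the non-bipartite block, Theorem~\ref{thm:even_odd} has to be applied to $K\cup P$ rather than to $G_{i+1}$, which need not be 2-connected.

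Two sentences should be repaired.

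First, your justification that the internal nodes of an open ear of $K$ avoid $H'$ is garbled. Cut-nodes of $B$ shared with blocks not yet added may well lie on the new part of $B$; they are simply not in $H$. What you need is $V(B)\cap V(H')\subseteq\{v\}\subseteq V(K)$, where $v$ is the node at which $B$ was entered. This follows from your own listed fact that every cycle lies in one block. Suppose $B$ met the connected graph $H'$ in two nodes $x\neq y$. Then an $x$--$y$ path in $B$ and an $x$--$y$ path in $H'$ are edge-disjoint, because blocks partition the edges. Their union therefore contains a cycle with edges from two different blocks, a contradiction. The same argument is what justifies your later claim that ``$B'$ meets $H'$ only in $v$''.

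Second, in your last paragraph, ``closed $\Rightarrow$ new block'' holds by construction. ``New block $\Rightarrow$ closed'' holds for every ear decomposition, by your cycle argument for open ears. Your second ``conversely'' sentence is just the contrapositive of the first and can be dropped.
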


%%%%%%%%%%%%%%%%%%%%%%%%%%%%%%%%
\section{Basic implications and bipartite graphs}
\label{sec:bipartite}
%%%%%%%%%%%%%%%%%%%%%%%%%%%%%%%%

To better understand the properties studied in this paper, we first prove as a warm-up the easy implications given in Theorem~\ref{thm:trivial}, and then show that all the properties are equivalent for edge-weighted matching-covered \emph{bipartite} graphs.

%%%%%%%%%%%%%%%%%%%%%%%%%%%%%%%%
\subsection{Proof of Theorem~\ref{thm:trivial}}
%%%%%%%%%%%%%%%%%%%%%%%%%%%%%%%%

\thmtrivial*
\begin{proof}
    We prove the implications one by one.
    \medskip
    
    \noindent\textbf{\ref{NIND}$\implies$\ref{EWP}.}
    Let $f$ be the node-weight function that induces $w$. Let $P$ be an arbitrary even closed walk with edges $e_{1}, e_{2}, \dots, e_{2k}$ in order, and nodes $v_{0}, v_{1}, \dots, v_{2k}=v_{0}$. Then
    \[
    \sum_{i=1}^{k} w(e_{2i-1}) = \sum_{i=0}^{k-1} \bigl(f(v_{2i})+f(v_{2i+1})\bigr) = \sum_{i=1}^{k} \bigl(f(v_{2i-1})+f(v_{2i})\bigr)=\sum_{i=1}^{k} w(e_{2i}).
    \]
    \medskip
    
    \noindent\textbf{\ref{EWP}$\implies$\ref{ECP}.}
    This is immediate, since every even cycle is an even closed walk.
    \medskip
    
    \noindent\textbf{\ref{ECP}$\implies$\ref{MEQ}.}
    Let $M_1, M_2$ be perfect matchings. Then $M_1 \triangle M_2$ is a disjoint union of even cycles. On each such cycle, $M_1$ and $M_2$ restrict to the two perfect matchings of the cycle. By \ref{ECP}, these two matchings have equal weight on every cycle, hence $w(M_1)=w(M_2)$.
    \medskip
    
    \noindent\textbf{\ref{MEQ}$\implies$\ref{MMM}.}
    Since $G$ is matching-covered, every edge lies in a perfect matching. By \ref{MEQ}, all perfect matchings have the same weight, so every perfect matching containing a given edge is both minimum- and maximum-weight.
\end{proof}

%%%%%%%%%%%%%%%%%%%%%%%%%%%%%%%%
\subsection{Bipartite case}
%%%%%%%%%%%%%%%%%%%%%%%%%%%%%%%%

Consider the perfect matching polytope $P_{\mathrm{PM}}(G)$ of a matching-covered bipartite graph $G$, defined as the convex hull of the characteristic vectors of all perfect matchings of $G$.
\[
P_{\mathrm{PM}}(G) = \bigl\{ x \in \mathbb{R}^{E} \colon \sum_{e \in \delta(v)} x_e = 1\ \text{for all}\ v \in V,\ x \ge 0 \bigr\}.
\]
Therefore, viewing the maximum-weight perfect matching problem as a linear program, we can write the primal and dual problems as follows, where $A \in \{0, 1\}^{V \times E}$ is the incidence matrix of $G$:
\begin{center}
\begin{minipage}{0.35\textwidth}
\begin{tcolorbox}
\textbf{Primal}\vspace{-4mm}

\[
\begin{aligned}
\text{maximize} \quad & w\cdot x \\
\text{subject to} \quad & Ax = \mathbf{1} \\
& x \ge \mathbf{0}
\end{aligned}
\]
\end{tcolorbox}
\end{minipage}
\hspace{0.1\textwidth}
%\hfill
\begin{minipage}{0.35\textwidth}
\begin{tcolorbox}[valign=top]
\textbf{Dual}\vspace{-4mm}

\[
\begin{aligned}
\text{minimize} \quad & \mathbf{1}\cdot y \\
\text{subject to} \quad & yA \ge w
\end{aligned}
\]
\end{tcolorbox}
\end{minipage}
\end{center}
The following observation then follows easily.

\begin{thm}\label{thm:bipartite}
Let $G$ be a matching-covered bipartite graph. Then \ref{NIND}, \ref{EWP}, \ref{ECP}, \ref{MEQ}, and \ref{MMM} are equivalent.
\end{thm}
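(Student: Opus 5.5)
By Theorem~\ref{thm:trivial}, we already have the chain \ref{NIND}$\implies$\ref{EWP}$\implies$\ref{ECP}$\implies$\ref{MEQ}$\implies$\ref{MMM}. It therefore suffices to close the cycle by proving \ref{MMM}$\implies$\ref{NIND} for a matching-covered bipartite graph $G$. The plan is to use LP duality for the primal/dual pair displayed above. This is exactly where bipartiteness enters: the perfect matching polytope is described by the degree constraints alone, so its vertices are the perfect matchings and the displayed LP computes the maximum weight of a perfect matching.

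\textbf{Step 1: a dual for the maximum.} Let $w_{\max}$ be the maximum weight of a perfect matching. The primal is feasible, since $G$ has a perfect matching, and bounded, since its feasible region is the polytope $P_{\mathrm{PM}}(G)$. Hence strong duality gives an optimal dual solution $y\in\bR^V$ with $y(u)+y(v)\ge w(uv)$ for all $uv\in E$ and $\mathbf 1\cdot y=w_{\max}$. Complementary slackness says that every edge used by some maximum-weight perfect matching, viewed as an optimal primal vertex, is tight: $y(u)+y(v)=w(uv)$. By \ref{MMM}, every edge lies in some maximum-weight perfect matching. Therefore every edge is tight, and $w(uv)=y(u)+y(v)$ for all $uv\in E$. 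This is already \ref{NIND}, with $f=y$.

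\textbf{Step 2: the complementary slackness direction.} To apply complementary slackness in this direction, I would argue directly rather than cite it. Let $M$ be a maximum-weight perfect matching. Then
\[
w_{\max}=w(M)\le\sum_{uv\in M}\bigl(y(u)+y(v)\bigr)=\mathbf 1\cdot y=w_{\max},
\]
where the middle equality uses that $M$ covers each node exactly once. Equality therefore holds throughout, and since each term satisfies $w(uv)\le y(u)+y(v)$, every edge of $M$ is tight.

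Note that only the maximum half of \ref{MMM} is used; the minimum half is not needed. I expect no real obstacle here. The only points requiring care are the justifications that the LP optimum equals $w_{\max}$ and that dual optimality with equality is attained. Both follow from the integrality of the bipartite perfect matching polytope (Birkhoff--von Neumann / total unimodularity of the incidence matrix) and from LP strong duality.
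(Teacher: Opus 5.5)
Your proof is correct and follows essentially the same route as the paper: you use Theorem~\ref{thm:trivial} to reduce to \ref{MMM}$\implies$\ref{NIND}, take an optimal dual $y$ of the bipartite maximum-weight perfect matching LP, and combine complementary slackness with \ref{MMM} to conclude that every edge is tight, so $y$ induces $w$. Your Step~2 just writes out the complementary slackness argument that the paper cites. It also makes explicit the reliance on integrality of $P_{\mathrm{PM}}(G)$, which the paper leaves implicit.
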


\begin{proof}
    By \cref{thm:trivial}, it suffices to show that \ref{MMM} implies \ref{NIND}. Let $x$ and $y$ be a pair of optimal primal and dual solutions, respectively. By complementary slackness, for every edge $uv \in E$ that is contained in a maximum-weight perfect matching, we have
    \[
       y_u + y_v = w_{uv}.
    \]
    Since $G$ satisfies \ref{MMM} and the graph is matching-covered, every edge belongs to a maximum-weight perfect matching, hence this equality holds for every edge $uv \in E$. Thus the dual optimal solution $y$ induces the edge-weight function $w$, that is, $w(uv) = y_u + y_v$ for all $uv \in E$, which shows that $w$ is node-induced. This completes the proof.
\end{proof}

%%%%%%%%%%%%%%%%%%%%%%%%%%%%%%%%
\section{Implications between the main properties}
\label{sec:impliations}
%%%%%%%%%%%%%%%%%%%%%%%%%%%%%%%%

This section is devoted to the proofs of our main results, Theorems~\ref{thm:main1}-\ref{thm:main4}. 

%%%%%%%%%%%%%%%%%%%%%%%%%%%%%%%%
\subsection{General graphs}
\label{sec:general}
%%%%%%%%%%%%%%%%%%%%%%%%%%%%%%%%

Our first result shows that \ref{EWP} and \ref{NIND} are in fact equivalent.

\thmmainfirst*
\begin{proof}
    By Theorem~\ref{thm:trivial}, it suffices to prove that \ref{EWP} implies \ref{NIND}. Let $G=(V,E)$ be a graph and let $w\colon E \to \mathbb{R}$ satisfy \ref{EWP}. We may assume that $G$ is connected, since otherwise the argument can be applied independently on each connected component. 
        
    If $G$ is non-bipartite, then it contains an odd cycle. Moreover, for every $v \in V$ there exists an odd closed walk $W$ that visits $v$, obtained as follows: take an odd cycle $C$, choose a path from $v$ to $C$, and traverse this path to the cycle and back before traversing $C$. The key idea is to construct a potential via alternating sums along odd walks and to show that it is well defined using \ref{EWP}. More precisely, for every node $v\in V$, fix an odd closed walk $W$ starting and ending at $v$, and write its edges in order as $e_{1}, \dots, e_{2k+1}$ where the walk starts at $v$, ends at $v$, and $e_{2k+1}$ is the last edge returning to $v$. Define $f(v) \coloneqq \frac{1}{2}\sum_{i = 1}^{k} (-1)^{i+1}w(e_{i})$. 

    We first show that $f(v)$ is well defined, i.e., independent of the choice of the walk. Let $W$ and $W'$ be two odd closed walks visiting $v$, with edge sequences $e_1,\dots,e_{2k+1}$ and $e'_1,\dots,e'_{2\ell+1}$, respectively. Then the concatenation $W \circ W'$ is an even closed walk. By \ref{EWP}, the alternating sum of edge weights over this walk is zero, hence
    \[
        0=\sum_{i=1}^{2k+1} (-1)^{i+1}w(e_{i})-\sum_{i=1}^{2\ell+1} (-1)^{i+1}w(e'_{i}).
    \]
    This shows that the value of $f(v)$ does not depend on the chosen odd closed walk.

    We claim that $f$ induces $w$. Let $uv$ be an arbitrary edge of the graph. Let $W$ be an odd closed walk starting and ending at $u$ with edge sequence $e_{1},\dots,e_{2k+1}$. We construct another odd walk $W'$ starting and ending at $v$ by adding two copies of the edge $uv$ to $W$, that is, let $e_{0}=e_{2k+2}=uv$. Then
    \[
        f(u) + f(v) 
        = \frac{1}{2}\sum_{i = 1}^{2k+1} (-1)^{i+1}w(e_{i})+\frac{1}{2}\sum_{i = 0}^{2k+2} (-1)^{i}w(e_{i})
        =\frac{1}{2}(w(e_0)+w(e_{2k+2}))
        =w(uv),
    \]
    which completes the proof if $G$ is non-bipartite.

    If $G$ is bipartite, we proceed in a similar way. Fix an arbitrary node $v \in V$ and set $f(v)$ to be an arbitrary real value. For every node $u\in V-v$, fix a $uv$-path $P$ and let $e_{1}, \dots, e_{k}$ be its edges in order where $e_1$ is incident to $u$. Define $f(u) \coloneqq \frac{1}{2}\sum_{i = 1}^{k} (-1)^{i+1}w(e_{i})+(-1)^k f(v)$. 

    We first show that $f(u)$ is well defined, i.e., independent of the choice of the path. Let $P$ and $P'$ be two $uv$-paths with edge sequences $e_{1}, \dots, e_{k}$ and $e'_{1}, \dots, e'_{\ell}$, respectively. Since $G$ is bipartite, all $uv$-paths have the same parity, hence $k \equiv \ell \pmod 2$. Therefore, concatenating $P$ with the reverse of $P'$ results in an even closed walk. By \ref{EWP}, the alternating sum of edge weights over this walk is zero, hence
    \begin{align*}
        0
        &=
        \sum_{i=1}^{k} (-1)^{i+1} w(e_i)
        +\sum_{i=1}^{\ell} (-1)^{k+i+1} w(e'_{\ell+1-i})\\
        &=
        \sum_{i=1}^{k} (-1)^{i+1} w(e_i)
        +\sum_{i=1}^{\ell} (-1)^{k+\ell-i} w(e'_{i})\\
        &=
        \sum_{i=1}^{k} (-1)^{i+1} w(e_i)
        -\sum_{i=1}^{\ell} (-1)^{i+1} w(e'_{i}).
    \end{align*}
    Since $(-1)^k f(v)=(-1)^\ell f(v)$, this shows that the value of $f(u)$ does not depend on the chosen $uv$-path.
    
    We now show that $f$ induces $w$. Let $st \in E$. Let $P_{tv}$ be a $tv$-path, and let $P_{sv}$ be the $sv$-path obtained by prepending the edge $st$ to $P_{tv}$. Writing $P_{tv}$ as $e_{1}, \dots, e_{k}$, we obtain
    \[
        f(s) = w(st) + \sum_{i=1}^{k} (-1)^{i} w(e_i) + (-1)^{k+1} f(v),
    \quad
    f(t) = \sum_{i=1}^{k} (-1)^{i+1} w(e_i) + (-1)^k f(v),
    \]
    and therefore $f(s)+f(t)=w(st)$ as required.
\end{proof}

%%%%%%%%%%%%%%%%%%%%%%%%%%%%%%%%
\subsection{Graphs with at most one non-bipartite block}
\label{sec:noodd}
%%%%%%%%%%%%%%%%%%%%%%%%%%%%%%%%

Let $G=(V,E)$ be a graph and $w\colon E\to\bR$ be an edge-weight function. Then, we may view $w$ as a vector in $\mathbb{R}^{E}$, where each coordinate corresponds to an edge. This raises the natural question of determining the dimension of the space of node-induced edge-weights. We denote the space of edge-weight functions satisfying \ref{NIND} by $\mathcal{N}_G$. 

\begin{lem}\label{lem:space}
    Let $G = (V, E)$ be a connected graph. Then $\dim(\mathcal{N}_G) = |V| - 1$ if $G$ is bipartite and $\dim(\mathcal{N}_G) = |V|$ otherwise.
\end{lem}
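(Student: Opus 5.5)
The plan is to view $\mathcal{N}_G$ as the image of a linear map and compute its dimension with the dimension theorem. Define $\varphi\colon \bR^V \to \bR^E$ by $\varphi(f)(uv) \coloneqq f(u)+f(v)$. Its matrix is the transpose $A^\top$ of the (unsigned) incidence matrix. By definition $\mathcal{N}_G = \im\varphi$, so
\[
\dim(\mathcal{N}_G) = |V| - \dim(\ker\varphi).
\]
It therefore suffices to show that $\ker\varphi$ is one-dimensional when $G$ is bipartite and trivial otherwise.

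The kernel consists of the functions $f$ with $f(u) = -f(v)$ for every edge $uv$. Fix a node $r$. Since $G$ is connected, every node $u$ is joined to $r$ by a walk. Propagating the relation along that walk gives $f(u) = (-1)^{k} f(r)$, where $k$ is the length of the walk. Hence $f$ is determined by the single value $f(r)$, and $\dim(\ker\varphi) \le 1$ in all cases.

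In the bipartite case, let $V = V_1 \cup V_2$ be the bipartition with $r\in V_1$. Set $f \equiv 1$ on $V_1$ and $f \equiv -1$ on $V_2$. Every edge joins $V_1$ to $V_2$, so this $f$ is a nonzero element of the kernel, and $\dim(\ker\varphi)=1$.

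In the non-bipartite case, use the odd closed walk through any node, constructed as in the proof of Theorem~\ref{thm:main1} by going from $r$ to an odd cycle, around it, and back. Applying the propagation along this closed walk of odd length gives $f(r) = -f(r)$. Thus $f(r)=0$, so $f\equiv 0$ and $\dim(\ker\varphi)=0$.

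Nothing here is a real obstacle. The only point that needs care is noticing that the relation is propagated along walks rather than paths, which is what lets the odd closed walk force $f(r)=0$. Substituting the two kernel dimensions into the displayed formula gives $|V|-1$ and $|V|$, respectively.
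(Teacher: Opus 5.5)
Your proof is correct and takes essentially the same route as the paper: you write $\mathcal{N}_G=\im\varphi$, apply the dimension theorem, and show that $\ker\varphi$ is spanned by the $\pm1$ bipartition function in the bipartite case and is trivial otherwise. The only cosmetic difference is in the non-bipartite case: the paper first forces $f\equiv 0$ on an odd cycle and then propagates along paths, whereas you note that $f$ is determined by $f(r)$ and get $f(r)=-f(r)$ directly from an odd closed walk through $r$.
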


\begin{proof}
    Let $f$ be a node-weight function, which we view as a vector in $\mathbb{R}^{V}$. Such an $f$ induces an edge-weight function $w$ defined by $w(uv)=f(u)+f(v)$ for every $uv\in E$. This defines a linear map $\varphi\colon\mathbb{R}^{V}\to\mathbb{R}^{E}$, mapping $f$ to $w$. By definition, $\mathcal{N}_G=\im \varphi$. Moreover, $\dim( \mathbb{R}^{V})=\dim(\ker \varphi)+\dim(\im \varphi)$, hence $\dim (\mathcal{N}_G)=|V|-\dim(\ker \varphi)$. Thus it suffices to determine $\ker \varphi$, i.e., the space of node-weights $f$ such that $f(u)+f(v)=0$ for every $uv\in E$.
    
    First assume that $G$ is non-bipartite. Then $G$ contains an odd cycle $C$. Let $f\in\ker \varphi$. For every edge $uv\in E(C)$ we have $f(u)=-f(v)$, hence the values of $f$ alternate along $C$. Since $C$ has odd length, this forces $f$ to take the same value and its negative at the starting node, hence $f\equiv 0$ on $V(C)$. Now let $x\in V$ be arbitrary. Since $G$ is connected, there exists a path from $x$ to $C$. Along each edge $uv$ of this path we have $f(u)=-f(v)$, so values propagate along the path. Since all nodes of $C$ have value $0$, it follows that $f(x)=0$. Hence $f\equiv 0$, and $\dim(\ker \varphi)=0$.
    
    Now assume that $G$ is bipartite with partition classes $A$ and $B$. Let $f\in\ker \varphi$. Then $f(u)=-f(v)$ for every edge $uv$ with $u\in A$ and $v\in B$. Fix $v_0\in A$ and set $x\coloneqq f(v_0)$. Let $u\in A$, and let $P$ be a path from $v_0$ to $u$. Since $G$ is bipartite, $P$ alternates between $A$ and $B$, and repeated use of $f(u)=-f(v)$ implies $f(u)=x$. Similarly, for every $v\in B$ we obtain $f(v)=-x$. Conversely, every function of this form satisfies $f(u)+f(v)=0$ on every edge, hence lies in $\ker \varphi$. Therefore $\dim(\ker \varphi)=1$.
    
    Combining both cases with $\dim (\mathcal{N}_G)=|V|-\dim(\ker \varphi)$ yields the statement of the lemma.    
\end{proof}

The following is a direct corollary.

\begin{cor}\label{cor:injective}
    Let $G = (V, E)$ be a connected non-bipartite graph. Consider the linear map $\varphi\colon\bR^V\to\bR^E$ defined by $(\varphi f)(uv)\coloneqq f(u)+f(v)$ for all $uv\in E$. Then $\varphi$ is injective.
\end{cor}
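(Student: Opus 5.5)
The plan is to read the corollary off the proof of Lemma~\ref{lem:space}, combined with the dimension theorem. The map $\varphi\colon\bR^V\to\bR^E$, $(\varphi f)(uv)=f(u)+f(v)$, is exactly the linear map used there, and by definition $\mathcal{N}_G=\im\varphi$. A linear map is injective if and only if its kernel is trivial, so the whole task is to show that $\ker\varphi=\{0\}$.

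The first option is to use dimensions. For connected non-bipartite $G$, Lemma~\ref{lem:space} gives $\dim(\mathcal{N}_G)=\dim(\im\varphi)=|V|$. The dimension theorem gives $\dim(\ker\varphi)=|V|-\dim(\im\varphi)=0$, so $\ker\varphi=\{0\}$ and $\varphi$ is injective.

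Alternatively, the kernel argument can be repeated directly in two steps. Take $f\in\ker\varphi$ and an odd cycle $C$ of $G$.
\begin{enumerate}[label=(\arabic*)]\itemsep0em
    \item The relation $f(u)=-f(v)$ along the odd number of edges of $C$ returns to the start node $v$ with $f(v)=-f(v)$. Hence $f(v)=0$, and then $f\equiv 0$ on $V(C)$.
    \item Connectivity lets us propagate zero values along a path from any node to $C$, so $f\equiv 0$ on all of $V$.
\end{enumerate}

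There is no real obstacle here. The only point to check is that the hypotheses match: connectedness is needed so that the propagation in step (2) reaches every node, and non-bipartiteness is needed to supply the odd cycle in step (1).
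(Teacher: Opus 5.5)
Your proposal is correct and matches the paper's proof, which likewise just reads off $\dim(\ker\varphi)=0$ from the kernel computation in the proof of Lemma~\ref{lem:space}. Your second route (odd cycle forces $f\equiv 0$ on $C$, then connectivity propagates zeros) is exactly that computation. Your first route recovers the same fact from the dimension theorem, which is also valid.
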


\begin{proof}
    The statement follows from the fact that $\dim (\ker \varphi) = 0$ when $G$ is non-bipartite. 
\end{proof}

Another simple corollary is as follows.

\begin{cor}\label{cor:jgnofg}
Let $G$ be a loopless graph. Then $\dim(\cN_G) = |V| - p(G)$, where $p(G)$ denotes the number of bipartite components of $G$.
\end{cor}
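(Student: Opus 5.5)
The plan is to reduce to Lemma~\ref{lem:space} component by component. Let $G_1,\dots,G_c$ be the connected components of $G$, with vertex sets $V_1,\dots,V_c$ and edge sets $E_1,\dots,E_c$. Since $G$ is loopless, each edge lies in exactly one component. Hence $\bR^E=\bigoplus_i \bR^{E_i}$ and $\bR^V=\bigoplus_i \bR^{V_i}$. The map $\varphi\colon \bR^V\to\bR^E$, $(\varphi f)(uv)=f(u)+f(v)$, respects this decomposition: the value of $\varphi f$ on $E_i$ depends only on the restriction $f|_{V_i}$. Thus $\varphi=\bigoplus_i \varphi_i$, where $\varphi_i$ is the corresponding map for $G_i$.

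It follows that $\cN_G=\im\varphi=\bigoplus_i \im\varphi_i=\bigoplus_i \cN_{G_i}$, so $\dim(\cN_G)=\sum_i \dim(\cN_{G_i})$. By Lemma~\ref{lem:space}, $\dim(\cN_{G_i})=|V_i|-1$ if $G_i$ is bipartite and $|V_i|$ otherwise. Summing over the components gives $|V|-p(G)$. Equivalently, one can phrase this through the kernel: $\ker\varphi=\bigoplus_i\ker\varphi_i$ has dimension equal to the number of bipartite components, and the dimension theorem then gives the claim.

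The one point to check is the edge case of isolated vertices. A single-vertex component is connected and bipartite with no edges, so $\cN_{G_i}=\bR^{\emptyset}=\{0\}$ has dimension $0=|V_i|-1$. The proof of Lemma~\ref{lem:space} covers this case in the same way: $\ker\varphi_i=\bR^{V_i}$, which is one-dimensional. So isolated vertices are correctly counted among the bipartite components.

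The looplessness assumption is what allows the direct-sum splitting without complication. A loop $vv$ would give $w(vv)=2f(v)$ and make a component non-bipartite in an unusual way. Under the stated hypothesis, loops do not arise. There is no real obstacle here beyond recording the block-diagonal structure of $\varphi$.
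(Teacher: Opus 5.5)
Your proof is correct and matches the paper's argument, which simply applies Lemma~\ref{lem:space} to each connected component; you additionally spell out the splitting $\cN_G=\bigoplus_i \cN_{G_i}$ and check the isolated-vertex case. One small quibble: that splitting does not depend on looplessness, since a loop also lies within a single component. Looplessness only matters for how Lemma~\ref{lem:space} and the notion of bipartiteness are applied.
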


\begin{proof}
    The statement follows by applying Lemma~\ref{lem:space} to each connected component of $G$.
\end{proof}

We now turn to the second property, \ref{ECP}. Let $C$ be an even cycle of $G$, and let $M_1$ and $M_2$ denote the two perfect matchings of $C$. Then \ref{ECP} for $C$ is equivalent to
\[
w \cdot (\chi_{M_1} - \chi_{M_2}) = 0.
\]
For an even cycle $C$, we define its \emph{alternating characteristic vector} as $\nu_C \coloneqq \chi_{M_1} - \chi_{M_2}$, where $M_1$ and $M_2$ are the two perfect matchings of $C$. Note that $\nu_C$ is defined up to sign, but this ambiguity will not affect our arguments. Thus, $w$ satisfies \ref{ECP} if and only if
\[
    w \cdot \nu_C = 0 
\]
for every even cycle $C$. This naturally leads to the following definition. For a graph $G$, we define its \emph{alternating cycle space} as
\[
    \cC_G \coloneqq \spa\{\nu_C \colon C \text{ is an even cycle of } G\}.
\]

Our proof relies on the following two technical lemmas.

\begin{lem}\label{lem:C_subset_of_N_perp}
    For any graph $G$, we have $\cC_G \subseteq \cN_G^{\perp}$.
\end{lem}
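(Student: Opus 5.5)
Since $\cC_G$ is the span of the vectors $\nu_C$ and $\cN_G^{\perp}$ is a linear subspace, it suffices to check that each generator lies in $\cN_G^{\perp}$. That is, for every even cycle $C$ and every $w\in\cN_G$ we must show $w\cdot\nu_C=0$. This reduces the lemma to \ref{NIND} implying \ref{ECP} for a single cycle. That implication is already contained in the chain \ref{NIND}$\implies$\ref{EWP}$\implies$\ref{ECP} from the proof of Theorem~\ref{thm:trivial}. That part of the argument does not use matching-coveredness, so it applies to an arbitrary graph $G$.

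For a self-contained argument, write $w=\varphi f$ for some $f\in\bR^V$, with $\varphi$ as in Corollary~\ref{cor:injective}. Let $C$ have nodes $v_0,\dots,v_{2k-1}$ in cyclic order and edges $e_i=v_{i-1}v_i$ (indices mod $2k$), and let $M_1$ be the odd-indexed and $M_2$ the even-indexed edges. Then
\[
w\cdot\nu_C=\sum_{e\in M_1}\bigl(f(u_e)+f(v_e)\bigr)-\sum_{e\in M_2}\bigl(f(u_e)+f(v_e)\bigr)=\sum_{v\in V(C)}f(v)-\sum_{v\in V(C)}f(v)=0,
\]
where $u_e,v_e$ denote the endpoints of $e$. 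The middle equality holds because $M_1$ and $M_2$ are each perfect matchings of $C$, so each covers every node of $C$ exactly once.

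Equivalently, we have $\nu_C\cdot\varphi f=(\varphi^{\top}\nu_C)\cdot f$. Here $\varphi^{\top}\nu_C$ assigns to each node the sum of $\nu_C$ over its incident edges, which is $1-1=0$ on $V(C)$ and $0$ elsewhere. So $\nu_C\in\ker\varphi^{\top}=(\im\varphi)^{\perp}=\cN_G^{\perp}$.

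There is no real obstacle; the only point needing care is the sign ambiguity of $\nu_C$, which is harmless because orthogonality is preserved under negation.
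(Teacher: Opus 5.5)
Your proof is correct and follows the paper's route: the paper also invokes \ref{NIND}$\implies$\ref{ECP} from Theorem~\ref{thm:trivial} to obtain $\cN_G\subseteq\cC_G^{\perp}$ and then takes orthogonal complements, while you check the generators $\nu_C$ directly, which is equivalent in finite dimension. Your remark that this part of Theorem~\ref{thm:trivial} does not use matching-coveredness is worth keeping. So is your explicit computation via the two perfect matchings of $C$. Together they fill a small gap the paper leaves implicit: Theorem~\ref{thm:trivial} is stated only for matching-covered graphs, but the lemma is claimed for any graph.
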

\begin{proof}
    Let $w \in \cN_G$. Then $w$ satisfies \ref{NIND}, and hence, by \cref{thm:trivial}, it also satisfies \ref{ECP}. Therefore $w \cdot \nu_C = 0$ for every even cycle $C$, which implies that $w \in \cC_G^{\perp}$. Thus $\cN_G \subseteq \cC_G^{\perp}$, and taking orthogonal complements yields $\cC_G \subseteq \cN_G^{\perp}$.
\end{proof}

We now turn to the structural consequence of having a parity-consistent ear decomposition. In this setting, we show that the orthogonal complement of the alternating cycle space coincides exactly with the space of node-induced weight functions.

\begin{thm}\label{thm:ear_decomposition_main}
    Let $G$ be a graph whose block decomposition contains at most one non-bipartite block. Then $\cC_G = \cN_G^{\perp}$.
\end{thm}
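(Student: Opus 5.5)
Since Lemma~\ref{lem:C_subset_of_N_perp} already gives $\cC_G \subseteq \cN_G^{\perp}$, I will prove the reverse inclusion by a dimension count, showing $\dim \cC_G \ge |E| - \dim \cN_G$. By Corollary~\ref{cor:jgnofg}, $\dim \cN_G^{\perp} = |E| - |V| + p(G)$. Both spaces split as orthogonal direct sums over connected components, so I may assume $G$ is connected. Then $\dim \cN_G^\perp = |E|-|V|+1$ if $G$ is bipartite and $|E|-|V|$ otherwise.

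Next I reduce to blocks. Every cycle lies inside a single block, so $\cC_G = \bigoplus_B \cC_B$, where each $\cC_B$ is supported on $E(B)$. For a tree of blocks, $\sum_B(|V(B)|-1) = |V|-1$. So it suffices to show $\dim \cC_B \ge |E(B)|-|V(B)|+1$ for every bipartite block $B$ and $\dim \cC_B \ge |E(B)|-|V(B)|$ for the (at most one) non-bipartite block. Summing gives $|E|-|V|+1$ minus the number of non-bipartite blocks, which matches $\dim\cN_G^\perp$ in both cases. Bridges are bipartite blocks that contribute $0$.

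For a 2-connected block $B$, I take an open ear decomposition from Theorem~\ref{thm:whitney}, with $|E(B)|-|V(B)|$ ears after the initial cycle $G_0$. In the bipartite case $G_0$ is even, and each ear closes an even cycle with a path in the current graph. That cycle uses the ear's new edges, so its vector $\nu_C$ is linearly independent of the earlier ones. Together with $\nu_{G_0}$ this gives $|E(B)|-|V(B)|+1$ independent vectors.

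In the non-bipartite case I use the same triangular argument, but I need every ear $P_{i+1}$ to lie on an even cycle inside $G_{i+1}$. I start from an odd cycle $G_0$, which contributes nothing. This is the main obstacle: the graph $G_i$ is 2-connected and non-bipartite, so for an ear with endpoints $a,b$ I want paths of both parities between $a$ and $b$ in $G_i$. Then the one matching the ear's parity closes an even cycle containing the ear. To get this, I apply Theorem~\ref{thm:even_odd} to $G_{i+1}$, with the odd cycle $G_0$ and an edge $e$ of $P_{i+1}$. It yields an even cycle through $e$, and since the internal nodes of the ear have degree $2$, that cycle must contain the whole ear. Each such vector is independent because it is the first to use the ear's new edges. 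This gives $|E(B)|-|V(B)|$ independent vectors, completing the count.
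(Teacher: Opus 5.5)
Your argument is correct. It uses the same ingredients as the paper's proof: the inclusion $\cC_G\subseteq\cN_G^{\perp}$ plus a dimension count, ear decompositions started at an odd cycle, Theorem~\ref{thm:even_odd} to get an even cycle through each new ear, and the fact that such a cycle's vector is new because it is the first to touch the ear's edges. You organize these ingredients differently, though.

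The paper removes cut edges and takes one ear decomposition of the whole 2-edge-connected graph, starting with an odd cycle in the non-bipartite block; closed ears mark entry into new blocks. It then proves $\dim\cC_{G_i}+\dim\cN_{G_i}=|E(G_i)|$ ear by ear, using the upper bound from Lemma~\ref{lem:C_subset_of_N_perp} at every step. You instead split $\cC_G=\bigoplus_B\cC_B$ over blocks and use $\sum_B(|V(B)|-1)=|V|-1$ to reduce to a per-block lower bound. You then exhibit an explicit triangular family from an open ear decomposition of each 2-connected block, so you need the upper bound only once, globally.

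Your organization guarantees that Theorem~\ref{thm:even_odd} is only ever applied to a 2-connected graph containing the odd cycle. In the paper's single decomposition, once a closed ear has been added the current graph is no longer 2-connected. An open ear lying inside a bipartite block then falls under Case~2, since $G_i$ is already non-bipartite. Theorem~\ref{thm:even_odd} cannot handle it, because no cycle through that ear meets the odd cycle; it needs the parity argument of Case~1 instead. The paper's version, in return, gives the dimension identity for every intermediate $G_i$ and avoids the block-tree vertex count.
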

\begin{proof}
    By \cref{lem:C_subset_of_N_perp}, it is enough to show that $\dim(\mathcal{C}_G) + \dim(\mathcal{N}_G) = |E(G)|$. We may assume that the graph is 2-edge-connected, since if the equality holds for every 2-edge-connected component, then adding each cut edge increases the number of edges by one and decreases the number of bipartite components by one, while the dimension of $\mathcal{C}_G$ remains unchanged. Hence, the required equality follows by Corollary~\ref{cor:jgnofg}.
    
    Therefore, assume that $G$ is 2-edge-connected. We prove this by induction on the number of ears in a carefully chosen ear decomposition given by Theorem~\ref{thm:whitney}: the decomposition is selected so that, if $G$ contains a non-bipartite block, the construction starts within such a block and begins with an odd cycle in that block.

    The base case is $G$ being a cycle. By \cref{lem:space}, $\dim(\cN_G) = |V(G)|-1$ or $|V(G)|$, depending on whether this cycle has even or odd length, respectively. If $G$ is an even cycle, then $\cC_G$ is spanned by its alternating characteristic vector, so $\dim(\cC_G)=1$. Otherwise, there are no even cycles, hence $\dim(\cC_G)=0$. In both cases, the equality $\dim(\cC_G) + \dim(\cN_G) = |E(G)|$ holds, since $|E(G)|=|V(G)|$.
    
    Now assume that $G_{i+1}$ is obtained from $G_i$ by adding one ear, and that the statement holds for $G_i$, i.e. $\cC_{G_i}^{\perp} = \cN_{G_i}$. We first observe that any even cycle of $G_i$ is also an even cycle of $G_{i+1}$, hence $\cC_{G_i} \subseteq \cC_{G_{i+1}}$. Let the added ear introduce $\ell$ new nodes, so that
    $|V(G_{i+1})| = |V(G_i)| + \ell$ and $|E(G_{i+1})| = |E(G_i)| + \ell + 1$. We distinguish two cases depending on bipartiteness.
    \medskip
    
    \noindent\textbf{Case 1:} $G_i$ is bipartite.
    
    If $G_i$ is bipartite, then $G$ is bipartite, since otherwise the decomposition would start with an odd cycle in a non-bipartite block. Hence $G_{i+1}$ is bipartite. By Lemma~\ref{lem:space}, we have $\dim(\cN_{G_{i+1}}) = |V(G_{i+1})| - 1 = |V(G_i)| + \ell - 1$. Since $G_i$ is also bipartite, we similarly have $\dim(\cN_{G_i}) = |V(G_i)| - 1$, which by the induction hypothesis means $\dim(\cC_{G_i}) = |E(G_i)| - |V(G_i)| + 1$. Since $\cC_{G_i} \subseteq \cC_{G_{i+1}} \subseteq \cN_{G_{i+1}}^{\perp}$, we can bound the dimension:
    \begin{align*}
    \dim(\cC_{G_{i+1}}) 
    &\le |E(G_{i+1})| - \dim(\cN_{G_{i+1}})\\
    &= (|E(G_i)| + \ell + 1) - (|V(G_i)| + \ell - 1)\\
    &= |E(G_i)| - |V(G_i)| + 2\\
    &= \dim(\cC_{G_i}) + 1.
    \end{align*}
    We show that $\cC_{G_{i+1}} \neq \cC_{G_i}$. For this, it is enough to find an even cycle in $G_{i+1}$ that uses the new ear. If the ear is closed, then it is itself an even cycle. Otherwise, let its endpoints be $x,y$. Since $G_{i+1}$ remains bipartite, every $xy$-path in $G_i$ has the same parity, hence combining the ear with any $xy$-path yields an even cycle containing the ear.
    \medskip

    \noindent\textbf{Case 2:} $G_i$ is non-bipartite.

    By Lemma~\ref{lem:space}, we have $\dim(\cN_{G_{i+1}}) = |V(G_{i+1})| = |V(G_i)| + \ell$. Since $G_i$ is also non-bipartite, we similarly have $\dim(\cN_{G_i}) = |V(G_i)|$, which by the induction hypothesis means $\dim(\cC_{G_i}) = |E(G_i)| - |V(G_i)|$. Since $\cC_{G_i} \subseteq \cC_{G_{i+1}} \subseteq \cN_{G_{i+1}}^{\perp}$, we can bound the dimension:
    \begin{align*}
    \dim(\cC_{G_{i+1}}) 
    &\le |E(G_{i+1})| - \dim(\cN_{G_{i+1}})\\
    &= (|E(G_i)| + \ell + 1) - (|V(G_i)| + \ell)\\
    &= |E(G_i)| - |V(G_i)| + 1\\
    &= \dim(\cC_{G_i}) + 1.
    \end{align*}
    We show that $\cC_{G_{i+1}} \neq \cC_{G_i}$. For this, it is enough to find an even cycle in $G_{i+1}$ that uses the new ear. If the ear is closed, then it is itself an even cycle as, by the choice of the decomposition, it is part of a bipartite block of $G$. Otherwise, let its endpoints be $x,y$. By Theorem~\ref{thm:even_odd}, there exists an even cycle in $G_{i+1}$ that uses the new ear. Thus $\dim(\cC_{G_{i+1}}) = \dim(\cC_{G_i}) + 1$. Therefore, $\dim(\cC_{G_{i+1}}) + \dim(\cN_{G_{i+1}}) = (|E(G_i)| - |V(G_i)| + 2) + (|V(G_i)| + \ell - 1) = |E(G_{i+1})|$.
    \medskip
    
    In all cases, the inductive step preserves $\dim(\cC_{G_{i+1}}) + \dim(\cN_{G_{i+1}}) = |E(G_{i+1})|$, which completes the proof.
\end{proof}

Now we are ready to prove Theorem~\ref{thm:main2}.

\thmmainsecond*
\begin{proof}
    For \ref{it:2i}, note that $\mathcal{C}_G = \mathcal{N}_G^{\perp}$ by Theorem~\ref{thm:ear_decomposition_main}. Hence every edge-weight function $w \colon E \to \mathbb{R}$ satisfying \ref{ECP} lies in $\mathcal{C}_G^\bot = \mathcal{N}_G$, and therefore satisfies \ref{NIND}.

    For \ref{it:2ii}, let $G$ be a graph whose block decomposition contains at least two non-bipartite blocks. Choose two non-bipartite blocks $B_{1}$ and $B_{2}$ of $G$ such that in the block tree of $G$ every node on the unique path between $B_{1}$ and $B_{2}$ corresponds to a bipartite block of $G$. Consider the following edge-weight function:
    \[
        w(e) =
        \begin{cases}
            1 & \text{if } e \in E(B_{1}),\\
            2 & \text{if } e \in E(B_{2}),\\
            0 & \text{otherwise}.
        \end{cases}
    \]
    First, we show that $w$ satisfies \ref{ECP}. It is clear that no cycle contains edges from more than one block. Therefore, if an edge-weight function is constant within each block it satisfies \ref{ECP}, which holds for $w$. 
    Second, we show that no node-weight function induces $w$. Suppose for contradiction that $f\colon V \rightarrow \mathbb{R}$ induces $w$. It is clear that ${f|_{V(B_{1})}} \equiv \frac{1}{2}$ induces $w|_{E(B_{1})}$. Moreover, since $B_{1}$ is non-bipartite, by \cref{cor:injective} this is also unique. Similarly, ${f|_{V(B_{2})}} \equiv 1$ is the unique node-weight function that induces $w|_{E(B_{2})}$. Consider the subgraph $H$ of $G$ which is the union of the blocks of $G$ corresponding to the nodes of the unique path between $B_{1}$ and $B_{2}$ in the block tree of $G$. Since $H$ is the union of some bipartite blocks, $H$ is also bipartite. Therefore, as we have seen in the proof of \cref{lem:space}, whenever a node-weight function induces ${w|_H} \equiv 0$, its absolute value must be constant. However, both $B_{1}$ and $B_{2}$ by definition share a node with $H$, denote this nodes by $b_{1}$ and $b_{2}$. We showed that $f(b_{1}) = \frac{1}{2}$ and $f(b_{2}) = 1$. Since $b_{1}$ and $b_{2}$ are nodes of $H$, this contradicts that the absolute value of $f|_H$ is constant.
\end{proof}

%%%%%%%%%%%%%%%%%%%%%%%%%%%%%%%%
\subsection{Matching-covered graphs}
\label{sec:mcovered}
%%%%%%%%%%%%%%%%%%%%%%%%%%%%%%%%

The goal of this section is to clarify the connection between \ref{NIND} and \ref{MEQ}. Since the latter property is unaffected by edges not contained in any perfect matching, we restrict our attention to matching-covered graphs. 

Let $G=(V,E)$ be a matching-covered graph. We denote the space of edge-weight functions satisfying \ref{MEQ} by $\mathcal{M}_G$, while the space generated by the incidence vectors of perfect matchings is denoted by $\cP_G$. In \cite{lovasz1987matching}, Lovász proved the following.

\begin{thm}[Lovász]\label{thm:lovasz}
    For a matching-covered graph $G$, the dimension of $\cP_G$ is $|E(G)|-|V(G)|+2-b(G)$, where $b(G)$ is the number of bricks in the tight cut decomposition of $G$. 
\end{thm}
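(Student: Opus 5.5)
The statement is Lovász's theorem, which the paper only cites. My plan is to reprove it by induction along the tight cut decomposition. I would show that the quantity $|E(G)|-|V(G)|+2-\dim(\cP_G)$ adds up correctly under splitting along a tight cut, and then compute $\dim(\cP_G)$ directly for braces and bricks. Since $\cP_G$ is the linear span of the perfect matching polytope $P_{\mathrm{PM}}(G)$, and that polytope lies in the hyperplane $\sum_e x_e=|V|/2$ not through the origin, we have $\dim(\cP_G)=\dim P_{\mathrm{PM}}(G)+1$. So it suffices to show that $\dim P_{\mathrm{PM}}(G)=|E|-|V|+1-b(G)$.

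\textbf{Step 1 (additivity over a tight cut).} Let $C=\delta(S)$ be a tight cut and let $G_1=G/S$ and $G_2=G/(V\setminus S)$. Then
\[
|E(G)|=|E(G_1)|+|E(G_2)|-|C|, \qquad |V(G)|=|V(G_1)|+|V(G_2)|-2, \qquad b(G)=b(G_1)+b(G_2).
\]
Hence the claimed formula is consistent with the identity $\dim(\cP_G)=\dim(\cP_{G_1})+\dim(\cP_{G_2})-|C|$, and this identity is what I would prove. Every perfect matching $M$ of $G$ meets $C$ in exactly one edge $e$. It restricts to perfect matchings $M_1$ of $G_1$ and $M_2$ of $G_2$, both containing $e$. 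Conversely, any two such matchings that agree on $e\in C$ glue together to a perfect matching of $G$.

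Under the natural identification $\bR^{E(G)}\hookrightarrow \bR^{E(G_1)}\times\bR^{E(G_2)}$ (duplicating the coordinates of $C$), the space $\cP_G$ therefore lies inside the fiber product
\[
\{(x_1,x_2) : x_i\in\cP_{G_i},\ x_1|_C=x_2|_C\}.
\]
I would show equality with this fiber product. The key tool is that $\cP_{G_i}$ contains, for each edge $e\in C$, a matching using $e$; this holds because each $G_i$ is matching-covered. Write $x_i=\sum_{e\in C}\sum_j \lambda^{(i)}_{e,j}\chi_{M^{(i)}_{e,j}}$ with the matchings grouped by their cut edge. The sums $\sum_j\lambda^{(i)}_{e,j}$ equal $x_i(e)$ and agree across $i=1,2$. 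Pairing matchings through $e$ with the products $\lambda^{(1)}_{e,j}\lambda^{(2)}_{e,k}/x(e)$, or with a correction term when $x(e)=0$, then expresses $(x_1,x_2)$ as a combination of glued matchings.

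The dimension of the fiber product is $\dim\cP_{G_1}+\dim\cP_{G_2}-|C|$, provided the restriction maps $\cP_{G_i}\to\bR^C$ are surjective. They are, because the $|C|$ cut edges all lie in a single vertex star of $G_i$ (the contracted node), and matchings through distinct edges of that star give linearly independent restrictions.

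\textbf{Step 2 (braces).} For a bipartite matching-covered graph, $P_{\mathrm{PM}}$ is given by the degree equations and $x\ge0$. Matching-coveredness implies that no inequality $x_e\ge 0$ is an implicit equality. Hence the dimension is $|E|$ minus the rank of the incidence matrix, which is $|V|-1$, giving $\dim P_{\mathrm{PM}}(G)=|E|-|V|+1$. This agrees with $b=0$.

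\textbf{Step 3 (bricks), the main obstacle.} Here I need $\dim P_{\mathrm{PM}}(G)=|E|-|V|$. The incidence matrix of a connected non-bipartite graph has rank $|V|$ by Corollary~\ref{cor:injective}, so it suffices to show that, in Edmonds' description, no odd-set constraint $x(\delta(S))\ge1$ with $1<|S|<|V|-1$ and no nonnegativity constraint is an implicit equality. A nontrivial odd set that is tight for the whole polytope is precisely a tight cut, and bricks have none. Nonnegativity constraints are not implicit equalities by matching-coveredness. The delicate point is to argue that the affine hull is cut out only by the degree equations once every individual inequality is non-implicit. This follows from the standard polyhedral fact that the affine hull of a polyhedron is determined by its implicit equalities. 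Thus the brick case reduces to the absence of tight cuts together with Edmonds' theorem.

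Combining Steps 1--3 by induction on the number of cut contractions gives $\dim(\cP_G)=|E(G)|-|V(G)|+2-b(G)$.
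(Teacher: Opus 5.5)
Your proof is correct. The paper gives no argument for this statement: it quotes it from Lov\'asz and uses it as a black box in the proof of Theorem~\ref{thm:main3}. So the comparison is between a citation and your self-contained derivation.

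Your route is the classical way this formula is obtained. It has three ingredients: additivity of $\dim(\cP_G)$ across a tight cut via the fiber-product description, the Birkhoff--von Neumann description for bipartite pieces, and Edmonds' description of $P_{\mathrm{PM}}$ for bricks. In the brick case, the absence of tight cuts is exactly the statement that no nontrivial odd-set inequality is an implicit equality.

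What this buys is economy. With bricks defined as in the paper (non-bipartite, matching-covered, no tight cuts), the rank formula needs only Edmonds' perfect matching polytope theorem and linear algebra. It needs none of the deeper structure theory of bricks. The citation instead places the result inside Lov\'asz's theory of the matching lattice, which says more, but the extra information is not used in the paper.

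A few details should be stated explicitly.
\begin{enumerate}[label=(\roman*)]
\item The identity $b(G)=b(G_1)+b(G_2)$ relies on the uniqueness of the tight cut decomposition quoted in the preliminaries. Alternatively, fix one decomposition tree and induct along it. Since $\dim(\cP_G)$ is intrinsic, your argument then shows as a by-product that the number of bricks does not depend on the decomposition.
\item Cut-contractions create parallel edges, so the induction must run over multigraphs. All your ingredients remain valid there: both polytope descriptions, the rank of the incidence matrix, and Corollary~\ref{cor:injective}.
\item In Edmonds' system, odd sets with $|S|\in\{1,|V|-1\}$, if included, do give implicit equalities. These coincide with degree equations, so they do not change the affine hull.
\item For the gluing when $x(e)=0$, a valid choice is $\mu_{j,k}=\lambda^{(1)}_{e,j}\,[k=k_0]+\lambda^{(2)}_{e,k}\,[j=j_0]$, with fixed indices $j_0,k_0$ and Iverson brackets. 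Its row and column sums are correct precisely because both $\lambda$-sums vanish. This is also where you need at least one perfect matching through $e$ on each side.
\end{enumerate}
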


Using this theorem, we are ready to prove our result.

\thmmainthird*

\begin{proof}
    By \cref{thm:trivial}, we have $\cN_G \subseteq \cM_G$. Therefore, $\cN_G = \cM_G$ holds if and only if $\dim(\cN_G) = \dim(\cM_G)$. A weight function $w$ belongs to $\cM_G$ if and only if $w \cdot (\chi_{M_1} - \chi_{M_2}) = 0$ for all perfect matchings $M_1, M_2$ of $G$. In other words, $\cM_G$ is the orthogonal complement of the difference space of $\cP_G$. Since this difference space has dimension $\dim(\cP_G) - 1$, we have $\dim(\cM_G) = |E| - (\dim(\cP_G) - 1)$. Applying Theorem~\ref{thm:lovasz}, we obtain:
    \[
        \dim(\cM_G) = |E| - (|E| - |V| + 2 - b(G) - 1) = |V| - 1 + b(G).
    \]
    
    If $G$ is bipartite, then $b(G) = 0$. In this case, \cref{lem:space} gives $\dim(\cN_G) = |V| - 1$, which equals $\dim(\cM_G)$, implying $\cN_G = \cM_G$.

    If $G$ is non-bipartite, then $b(G) \geq 1$ and \cref{lem:space} gives $\dim(\cN_G) = |V|$. In this case, the dimensions match if and only if $|V| = |V| - 1 + b(G)$, which simplifies to $b(G) = 1$. Thus, $\cN_G = \cM_G$ if and only if the tight cut decomposition of $G$ contains exactly one brick, which proves both \ref{it:3i} and~\ref{it:3ii}.
\end{proof}

\begin{rem}
    Let $G$ be a graph whose tight cut decomposition contains at least two bricks. Then we can explicitly construct an edge-weight function that satisfies~\ref{MEQ} but violates~\ref{NIND}.
    
    Indeed, let $C$ be a tight cut with the property that none of the corresponding $C$-contractions is bipartite; such a tight cut exists by $b(G)\geq 2$. Define $w \in \mathbb{R}^{E(G)}$ by
    \begin{equation*}
    w(e) =
    \begin{cases}
        1 & \text{if } e \in \delta(C), \\
        0 & \text{otherwise.}
    \end{cases}
    \end{equation*}
    Since $C$ is tight, every perfect matching of $G$ contains exactly one edge of $\delta(C)$, and hence every perfect matching has total weight $1$. Thus $w$ satisfies~\ref{MEQ}.
    
    On the other hand, $w$ is not node-induced. Indeed, both sides of the cut $C$ are non-bipartite, so no node weight function can induce $w$. Any such node weighting would have to assign value $0$ to all nodes, which would imply that all edges have weight $0$, contradicting the fact that edges in $\delta(C)$ have weight $1$.
\end{rem}

%%%%%%%%%%%%%%%%
\subsection{Resolving Question~\ref{qu:origin}}
\label{sec:negative}
%%%%%%%%%%%%%%%%

Finally, we answer Question~\ref{qu:origin} in the negative.

\thmmainfourth*

\begin{proof}
    We provide a constructive counterexample. Let $V$ be partitioned into four sets $V_1, V_2, V_3, V_4$, where $V_1$ and $V_3$ each induces a triangle, and $V_2$ and $V_4$ are independent sets of size two. We add all edges between $V_i$ and $V_{i+1}$ for $i \in \{1, 2, 3, 4\}$, where indices are considered cyclically. It is easy to check that the resulting graph $G$ is matching-covered and non-bipartite.

    Let $S = V_1 \cup V_2$. Note that $|S| = 3 + 2 = 5$, so the cut $C = \delta(S)$ is an odd cut. We define the edge-weight function $w \in \mathbb{R}^E$ as the characteristic vector of $C$:
    \begin{equation*}
    w(e) = 
    \begin{cases}
        1 & \text{if $e \in C$}, \\
        0 & \text{otherwise}. 
    \end{cases}
    \end{equation*}
    It is straightforward to verify that every edge is contained in a perfect matching of weight $1$ and also in one of weight $3$, and moreover $1$ and $3$ are respectively the minimum and maximum weights of a perfect matching. Therefore, $w$ satisfies \ref{MMM} but \ref{MEQ} fails as stated.  
\end{proof}

%%%%%%%%%%%%%%%%%%%%%%%%%%%%%%%%
\section{$b$-factors and arborescences}
\label{sec:bfacandarbs}
%%%%%%%%%%%%%%%%%%%%%%%%%%%%%%%%

The proof of Theorem~\ref{thm:ESA2025} in~\cite{maalouly2025finding} (with symmetry of minimum and maximum) is based on the observation that, when the maximum-weight matching problem is formulated as a linear program and its dual is considered, the complementary slackness conditions imply that a perfect matching is of maximum weight if and only if it consists of so-called tight edges, and it enters each member of a certain laminar family (the support of an optimal dual solution) with exactly one edge. This means that, if we wish to fix edges in such a way that the weight of a maximum-weight perfect matching decreases, it suffices to fix either a non-tight edge, or two tight edges that both enter the same member of the laminar family. In this section, we extend this idea in two directions: minimum-weight $b$-factors and minimum-cost arborescences.

%%%%%%%%%%%%%%%%%%%%%%%%%%%%%%%%
\subsection{$b$-factors}
\label{sec:bfac}
%%%%%%%%%%%%%%%%%%%%%%%%%%%%%%%%

The extension to $b$-factors is based on the close structural relationship between $b$-factors and perfect matchings. Take a graph $G=(V,E)$ and a node-capacity function $b \colon V \to \mathbb{Z}_+$. We say that an edge set $S \subseteq E$ is a \textit{$b$-factor} if every node $v \in V$ is incident exactly $b(v)$ edges of $S$. It is clear that a perfect matching is a $\mathbf{1}_V$-factor, where $\mathbf{1}_V$ denotes the all-one vector on $V$.

We give the following extension of Theorem~\ref{thm:ESA2025}.

\thmbfacs*

\begin{proof}
    We first recall the connection between $b$-factors and perfect matchings, see e.g.~\cite{schrijver2003combinatorial}.
    
    Given an edge-weighted graph $G = (V, E)$ and $b\colon V \rightarrow \mathbb{Z}_+$, we will construct a graph $G' = (V', E')$ such that a $b$-factor of $G$ corresponds to a perfect matching of $G'$ and vice versa. For each node $v\in V$, the graph $G'$ contains $b(v)$ many copies $v_{1}, \dots, v_{b(v)}$ of $v$. For each edge $e = uv \in E$, we introduce two new nodes $p_{e, u}$ and $p_{e, v}$ connected by an \emph{inner edge} $p_{e, u}p_{e, v}$. We also introduce \emph{outer edges} $u_{i}p_{e, u}$ for all $i \in [b(u)]$ and $v_{j}p_{e, v}$ for all $j \in [b(v)]$. For an edge $e = uv \in E$, the subgraph spanned by $u_{1}, \dots , u_{b(u)}, p_{e, u}, p_{e, v}, v_{1}, \dots , v_{b(v)}$ is called the \emph{corresponding gadget} of $e$. 
    
    Given a $b$-factor $S$ of $G$, we can construct a corresponding perfect matching $P$ of $G'$ as follows. If $e \notin S$, we pick its inner edge $p_{e, u}p_{e, v}$ in $G'$. If $e = uv \in S$, we pick one outer edge incident to $p_{e, u}$ and one outer edge incident to $p_{e, v}$. Since $S$ is a $b$-factor, exactly $b(v)$ edges of $S$ are incident to each node $v$. Because the outer edges form a complete bipartite graph between $\{v_{j} \colon j \in [b(v)]\}$ and $\{p_{e, v} \colon e \text{ is incident to } v\}$, we can easily route these choices such that every copy $v_j$ is covered exactly once. This logic holds in reverse, meaning every perfect matching of $G'$ can be represented as a $b$-factor of $G$. An important property of this construction is that $G'$ is bipartite if and only if $G$ is bipartite. 
    
    Given a weight function $w\colon E \rightarrow \mathbb{R}$, we define a weight function $w'$ on the edges of $G'$. For all outer edges, let $w'(u_{i}p_{e, u}) \coloneqq w(e)/2$ and $w'(v_{j}p_{e, v}) \coloneqq w(e)/2$. For all inner edges, let $w'(p_{e, u}p_{e, v}) \coloneqq 0$. When an edge $e=uv \in S$, the corresponding perfect matching $P$ selects exactly two outer edges associated with $e$, accumulating a total weight of $w(e)/2 + w(e)/2 = w(e)$. If $e \notin S$, $P$ selects the inner edge, contributing a weight of $0$. Thus, a $b$-factor of $G$ has the exact same weight as its corresponding perfect matching in $G'$.
    
    Now we are ready to prove the statement of the theorem. We consider the case where $G$ is bipartite; the non-bipartite case works similarly. Construct $G'$ and $w'$ as previously described. Since a  $b$-factor has the same weight as its corresponding perfect matching, the possible weights of the perfect matchings of $G'$ are also $x_{1} < x_{2} < \dots < x_{q}$. We can therefore apply \cref{thm:ESA2025}. This implies there exists an edge set $A \subseteq E'$ of size at most $\ell-1$ such that the minimum weight of a perfect matching containing $A$ is $x_{\ell}$. 
    
    From $A$, we determine the sets $F$ and $D$ for our original graph $G$. We assign an edge $e = uv \in E$ to $D$ if $p_{e, u}p_{e, v} \in A$. We assign $e = uv \in E$ to $F$ if $A$ contains at least one of its outer edges, that is, if $A \cap (\{u_{i}p_{e, u} \mid i \in [b(u)]\} \cup \{v_{j}p_{e, v} \mid j \in [b(v)]\}) \neq \emptyset$. From the correspondence described earlier, the perfect matchings of $G'$ containing $A$ correspond exactly to the $b$-factors $S$ of $G$ satisfying $F \subseteq S \subseteq E \setminus D$. Furthermore, because $A$ is a subset of a valid matching in $G'$, it cannot contain both an inner edge and an outer edge of the same gadget. Thus, each edge in $A$ contributes to at most one element in $D \cup F$, which yields $|D|+|F| \leq |A| \leq \ell-1$, completing the proof.
\end{proof}

%%%%%%%%%%%%%%%%%%%%%%%%%%%%%%%%
\subsection{Arborescences}
\label{sec:arbs}
%%%%%%%%%%%%%%%%%%%%%%%%%%%%%%%%

The extension to arborescences is based on an LP-based characterization of minimum-cost arborescences.
An \textit{arborescence} is a directed tree in which every node has in-degree $1$, except for a unique node, called the \textit{root}, which has in-degree $0$. An $r$-\textit{arborescence} of a digraph is a spanning subgraph that is an arborescence rooted at a specified node $r$.

Let $D=(V,A)$ be a digraph with a fixed root $r \in V$. Let $\mathcal{A}_{D}$ denote the set of all $r$-arborescences of $D$. An edge $uv \in A$ is said to \textit{enter} a set $Z \subseteq V$ if $u \notin Z$ and $v \in Z$. For $Z \subseteq V$ and $F\subseteq A$, let $d^\mathrm{in}_F(Z)$ denote the number of edges entering $Z$ in $F$. We say that $D$ is \textit{root-connected} if $d^\mathrm{in}_A(Z) \ge 1$ holds for every nonempty set $Z \subseteq V - {r}$. This is equivalent to requiring that every node is reachable from $r$, and also to the existence of an $r$-arborescence in $D$.

Suppose that $D$ is root-connected.
Fix an edge $e = uv \in A$, and let $D'$ be the graph obtained from $D$ by removing all edges except $e$ that enter $v$ and contracting $e$.
Then, there is a trivial one-to-one correspondence between the $r$-arborescences in $D$ containing $e$ and the $r$-arborescences in $D'$.
Indeed, for any $r$-arborescence $F$ in $D$ containing $e$, the edge set in $D'$ corresponding to $F - e$ is an $r$-arborescence in $D'$.
Conversely, for any $r$-arborescence $F'$ in $D'$, an $r$-arborescence in $D$ is obtained by adding $e$ to the edge set in $D$ corresponding to $F'$.
This implies that $D'$ is also root-connected with respect to the same root $r$.

Let $c\colon A \to \mathbb{R}_{+}$ be a nonnegative cost function. For a family $\mathcal{F} \subseteq 2^{V}$ and an edge $a \in A$, define $\mathcal{F}_a = \{Z \in \mathcal{F} \colon a \text{ enters } Z\}$. A vector $y \in \mathbb{R}^{\mathcal{F}}$ is called \textit{$c$-feasible} if $y \ge 0$ and $\sum_{Z\in \cF_{a}} y(Z) \leq c(a)$ for every $a \in A$. When $\mathcal{F} = \{Z \colon \emptyset \neq Z \subseteq V-r\}$, a $c$-feasible vector $y$ is in fact a dual solution to the minimum-cost $r$-arborescence problem. An edge $a \in A$ is called \textit{$c$-tight} if $\sum_{Z\in \cF_{a}} y(Z) = c(a)$. Bock~\cite{bock1971algorithm} and Fulkerson~\cite{fulkerson1974packing} proved the following min-max formula.

\begin{thm}[Bock, Fulkerson]\label{Bock--Fulkerson}
    Let $D=(V,A)$ be a root-connected digraph with root $r$, and let $c\colon A\to\bR_+$ be a cost function. Then the minimum cost of an $r$-arborescence of $D$ equals
    \begin{equation*}
        \max \left\{ \sum_{Z \subseteq V-r} y(Z) \colon y \text{ is } c\text{-feasible} \right\}.
    \end{equation*}
    Moreover, an optimal dual solution $y^\star$ can be chosen such that the family $\{Z \colon y^\star(Z) > 0\}$ is laminar. For any optimal dual solution $y^\star$, an $r$-arborescence $B \in \mathcal{A}_D$ is minimum-cost if and only if
    \begin{enumerate}[label=(\roman*)] \itemsep0em
        \item every edge $a \in B$ is $y^\star$-tight, and
        \item $d^\mathrm{in}_{B}(Z) = 1$ for every nonempty set $Z \subseteq V - r$ with $y^\star(Z) > 0$.
    \end{enumerate}
\end{thm}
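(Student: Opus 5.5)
This is the classical min-max theorem for arborescences. The plan is to separate it into an easy weak-duality part, which also yields the optimality conditions (i)--(ii), and a strong-duality part with a laminar optimal dual, proved by induction following Edmonds' contraction algorithm.

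\textbf{Weak duality and the characterization.} Let $B\in\mathcal{A}_D$ and let $y$ be $c$-feasible. Swapping the order of summation gives
\[
c(B)=\sum_{a\in B}c(a)\ \ge\ \sum_{a\in B}\sum_{Z\in\cF_a}y(Z)=\sum_{Z}y(Z)\,d^{\mathrm{in}}_B(Z)\ \ge\ \sum_Z y(Z).
\]
The last inequality uses $y\ge 0$ and the fact that $d^{\mathrm{in}}_B(Z)\ge1$ for every nonempty $Z\subseteq V-r$, which holds because every node of $Z$ is reachable from $r$ in $B$.

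Equality holds throughout exactly when two things happen. First, every $a\in B$ is tight. Second, $d^{\mathrm{in}}_B(Z)=1$ whenever $y(Z)>0$.

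Consequently, once strong duality is established (minimum cost equals the maximum of $\sum_Z y(Z)$), the following holds for any optimal $y^\star$: $B$ is minimum-cost if and only if both inequalities are tight. That is precisely conditions (i) and (ii).

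\textbf{Strong duality with laminar support.} I will prove this by induction on $|V|$, and it is enough to exhibit some arborescence and some laminar feasible $y$ of equal value.

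For each $v\ne r$, let $m(v)$ be the minimum cost of an edge entering $v$. This is finite by root-connectivity. Set $y(\{v\})=m(v)$ and define reduced costs $c'(uv)=c(uv)-m(v)\ge0$. Every arborescence has exactly one edge entering each $v\ne r$, so $c(B)=c'(B)+\sum_v m(v)$ for every $B$.

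Now pick, for each $v\ne r$, one entering edge with $c'=0$. If these chosen edges form an $r$-arborescence $B$, then $c(B)=\sum_v y(\{v\})$ and we are done.

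Otherwise the chosen edges contain a cycle $C$ avoiding $r$. Contract $C$ to a single node and delete loops to obtain $D/C$ with costs $c'$. The digraph $D/C$ is still root-connected and has fewer nodes. By induction it has an arborescence $B'$ and a laminar $c'$-feasible $y'$ with $c'(B')=\sum y'$.

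\textbf{Lifting back to $D$.} I lift $B'$ by adding all edges of $C$ except the one entering the head (in $D$) of the unique $B'$-edge entering $C$. The result is an $r$-arborescence $B$ of $D$, and $c'(B)=c'(B')$ because all edges of $C$ have reduced cost $0$.

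I lift $y'$ by replacing every set containing the contracted node by its expansion, which contains all of $V(C)$. An edge of $D$ enters an expanded set exactly when its image enters the original set in $D/C$. Edges inside $C$ enter no expanded set and no set disjoint from $C$, so they only carry singleton duals. Hence the combined dual $y$ (singletons plus lifted $y'$) satisfies
\[
\sum_{Z\in\cF_a}y(Z)\le m(v)+c'(a)=c(a)\qquad\text{for each } a=uv.
\]
Its value is $\sum_v m(v)+\sum y'=c(B)$.

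The main obstacle is checking that the lifted family stays laminar. The expansions of a laminar family on $V(D/C)$ form a laminar family. Each singleton $\{v\}$ is either contained in or disjoint from every other member, so adding the singletons preserves laminarity. Edge cases to handle are multiple edges created by the contraction (keep them; feasibility is checked edge by edge) and the base case $|V|=1$, which is trivial.
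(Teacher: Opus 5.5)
Your proof is correct. The paper does not prove this statement: it quotes it from Bock and Fulkerson and uses it as a black box in Lemma~\ref{lem:arborescence}, where only the ``for any optimal $y^\star$'' characterization is needed.

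Your argument is a self-contained proof along the standard Chu--Liu/Edmonds contraction route. The weak-duality identity
$c(B)-\sum_Z y(Z)=\sum_{a\in B}\bigl(c(a)-\sum_{Z\in\cF_a}y(Z)\bigr)+\sum_Z y(Z)\bigl(d^{\mathrm{in}}_B(Z)-1\bigr)$
writes the gap as a sum of nonnegative terms. It therefore yields conditions (i)--(ii) for every optimal dual, not only laminar ones. The induction then produces a primal--dual pair of equal value whose support consists of singletons and nested expansions of contracted cycles. So laminarity comes directly from the construction, with no separate uncrossing step. The citation buys the paper brevity; your route buys an explicit algorithmic certificate and a check of exactly the form of the theorem the paper relies on.

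Only small housekeeping is needed:
\begin{enumerate}[label=(\alph*)]\itemsep0em
\item Discard loops and edges entering $r$ at the start. Then $m(v)$ is a minimum over edges entering $\{v\}$, and every contracted cycle has at least two nodes.
\item Read ``edges inside $C$'' as all edges with both ends in $V(C)$, chords included.
\item For $x\notin V(C)$, add $m(x)$ to the lifted value $y'(\{x\})$ rather than overwriting it.
\end{enumerate}
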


Based on this result, we first prove the following.

\begin{lem}\label{lem:arborescence}
    Let $D = (V, A)$ be a root-connected digraph with root $r$, and let $c\colon A \rightarrow \mathbb{R}_{+}$ be a cost function. Let $F$ be an $r$-arborescence of second-smallest cost with respect to $c$. Then there exist edges $e, f \in A$ such that
    \[
        c(F) = \min \bigl\{ c(F') \colon F' \in \mathcal{A}_D,\, \{e, f\}\subseteq F' \bigr\}.
    \]
\end{lem}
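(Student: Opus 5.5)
We will use Theorem~\ref{Bock--Fulkerson} with an optimal dual solution $y^\star$ whose support $\mathcal{L}=\{Z\colon y^\star(Z)>0\}$ is laminar. Let $x_1<x_2$ be the two smallest costs, so $c(F)=x_2$. Since $F$ is not minimum-cost, the optimality characterization shows that at least one of its two conditions fails for $F$. We split into two cases and, in each, exhibit at most two edges $e,f\in F$ (allowing $e=f$) with the following property: no minimum-cost arborescence contains both $e$ and $f$. Then every $r$-arborescence containing $\{e,f\}$ has cost at least $x_2$, and $F$ itself contains $\{e,f\}$. Hence the minimum in the statement equals $c(F)$, which gives the lemma.

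\textbf{Case 1: some edge $a\in F$ is not $y^\star$-tight.} Take $e=f=a$. By condition (i) of Theorem~\ref{Bock--Fulkerson}, no minimum-cost arborescence contains $a$, so we are done.

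\textbf{Case 2: every edge of $F$ is tight.} Then condition (ii) must fail: there is some $Z\in\mathcal{L}$ with $d^{\mathrm{in}}_F(Z)\neq 1$. Since $Z\subseteq V-r$ is nonempty and $F$ is an $r$-arborescence, we have $d^{\mathrm{in}}_F(Z)\ge 1$, so in fact $d^{\mathrm{in}}_F(Z)\ge 2$. Choose two distinct edges $e,f\in F$ that both enter $Z$. Any arborescence containing both has $d^{\mathrm{in}}(Z)\ge 2$, so by condition (ii) it cannot be minimum-cost. Thus the min over arborescences containing $\{e,f\}$ is again at least $x_2$ and is attained by $F$.

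The only subtle point, and the one I expect to need care, is that the characterization in Theorem~\ref{Bock--Fulkerson} holds for \emph{any} optimal dual solution. This is what lets us fix $y^\star$ once, independently of $F$. Nonnegativity of $c$ is required for that theorem and is assumed in the statement. With this in place, the proof consists of the case distinction above. Laminarity is not even needed here; it will matter only in the iteration to prove Theorem~\ref{thm:arbs}. There one would apply the lemma repeatedly, fixing $e$ by restricting the in-edges of its head and contracting it as described before Theorem~\ref{Bock--Fulkerson}. Each step adds at most two edges, which yields the bound $2(\ell-1)$.
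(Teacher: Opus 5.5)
Your proof is correct and follows the paper's argument. You fix an optimal dual $y^\star$ and use the Bock--Fulkerson characterization to pick either a non-tight edge of $F$ or two edges of $F$ entering some $Z$ with $y^\star(Z)>0$, so that no minimum-cost arborescence contains $\{e,f\}$ while $F$ does. The differences are cosmetic: in the non-tight case the paper pairs the non-tight edge with an arbitrary other edge of $F$, whereas your choice $e=f$ is slightly cleaner because it does not need $F$ to have a second edge, and you also spell out why a violated condition (ii) forces $F$ to enter $Z$ at least twice.
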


\begin{proof}
    Let $y$ be a fixed optimal $c$-feasible function. Since $F$ is not a minimum-cost $r$-arborescence, by \cref{Bock--Fulkerson} it either contains an edge that is not $y$-tight, or there exists a nonempty set $Z \subseteq V - r$ with $y(Z) > 0$ such that $F$ enters $Z$ more than once.
    
    In the first case, let $e$ be a non-$y$-tight edge of $F$ and let $f$ be any other edge of $F$. In the latter case, let $e,f$ be two edges of $F$ entering $Z$. In either case, by \cref{Bock--Fulkerson}, any $r$-arborescence containing both $e$ and $f$ cannot be of minimum cost. Since $F$ is a second-smallest $r$-arborescence and contains both $e$ and $f$ by definition, we obtain
    \[
        \min \bigl\{ c(F') \colon F' \in \mathcal{A}_D \bigr\} < \min \bigl\{ c(F') \colon F' \in \mathcal{A}_D,\, \{e,f\}\subseteq F' \bigr\} = c(F),
    \]
    which completes the proof.
\end{proof}

The next theorem is a counterpart of Theorem~\ref{thm:ESA2025} for arborescences.

\thmarbs*

\begin{proof}
    We proceed by induction over $\ell$. For $\ell=1$ the statement is trivial and \cref{lem:arborescence} proves the statement for $\ell=2$. Let $\ell\geq 3$ and assume as induction hypothesis that the statement holds for all $\ell' < \ell$ and let $F$ be an $\ell$-th smallest $r$-arborescence. Based on the argument in \cref{lem:arborescence} there exist $e, f \in F$ such that
    \[
    x_{1} < \bigl\{ c(F') \colon F' \in \mathcal{A}_D,\, \{e,f\}\subseteq F' \bigr\} \leq c(F).
    \]
    Note that $e = f$ is possible, and otherwise the heads of $e$ and $f$ are different nodes.

    Consider the graph $D'$ obtained from $D$ by removing all edges except $e$ and $f$ that have the same head as $e$ or $f$ and contracting the edges $e$ and $f$. 
    As seen above, there is a trivial one-to-one correspondence between the $r$-arborescences in $D$ containing $\{e, f\}$ and the $r$-arborescences in $D'$, and $D'$ is also root-connected as $F \setminus \{e, f\}$ forms an $r$-arborescence in $D'$. %, which is denoted by $F/\{e, f\}$.
    Moreover, $F \setminus \{e, f\}$ is an $\ell'$-th smallest $r$-arborescence for some $\ell'<\ell$.
    This is because otherwise there are at least $(\ell - 1)$ $r$-arborescences $F'_i \in \mathcal{A}_{D'}$ such that $c(F'_1) < c(F'_2) < \dots < c(F'_{\ell - 1}) < c(F \setminus \{e, f\})$, which implies $x_1 < c(F'_1 \cup \{e, f\}) < c(F'_2 \cup \{e, f\}) < \dots < c(F'_{\ell - 1} \cup \{e, f\}) < c(F)$, contradicting $c(F) = x_\ell$.
    
    By the induction hypothesis, there exists $P' \subseteq F \setminus \{e, f\} \in \mathcal{A}_{D'}$ with $|P'| \leq 2(\ell'-1)$ such that
    \[
    c(F\setminus \{e,f\}) = \min \bigl\{ c(F') \colon F' \in \mathcal{A}_{D'},\, P' \subseteq F' \bigr\}.
    \]
    This concludes that $P = P' \cup \{e, f\}$ satisfies $|P| \le 2(\ell' - 1) + 2 \le 2(\ell - 1)$ and 
    \[
    x_\ell = c(F) = \min \bigl\{ c(F') \colon F' \in \mathcal{A}_{D},\, P \subseteq F' \bigr\},
    \]
    which completes the proof.
\end{proof}

%%%%%%%%%%%%%%%%
\medskip
\paragraph{Acknowledgment.} 
The research received further support from the Lend\"ulet Programme of the Hungarian Academy of Sciences -- grant number LP2021-1/2021, from the Ministry of Innovation and Technology of Hungary from the National Research, Development and Innovation Fund -- grant numbers ADVANCED 150556, ADVANCED 153096, and ELTE TKP 2021-NKTA-62, and from the Dynasnet European Research Council Synergy project -- grant number ERC-2018-SYG 810115.
Yutaro Yamaguchi was supported by JSPS KAKENHI Grant Number JP25H01114, JST CRONOS Japan Grant Number JPMJCS24K2, and JST ASPIRE Japan Grant Number JPMJAP2520.

\bibliographystyle{abbrv}
\bibliography{matchings}

\end{document}